\documentclass[12pt,a4paper,reqno]{amsart}

\usepackage{amsmath,mathrsfs}
\usepackage{amssymb}
\usepackage{color}
\usepackage{dsfont}

\newcommand\NoBlackBoxes{\global\overfullrule0pt}
\NoBlackBoxes

\allowdisplaybreaks

\usepackage{todonotes}

\newcommand{\Bin}{\operatorname{Bin}\,}

\makeatletter
\let\serieslogo@\relax
\let\@setcopyright\relax

\newtheorem{definition}{Definition}[section]
\newtheorem{theorem}[definition]{Theorem}
\newtheorem{lemma}[definition]{Lemma}
\newtheorem{proposition}[definition]{Proposition}

\newtheorem{corollary}[definition]{Corollary}

\renewcommand{\P}{{\mathbb{P}}}
\newcommand{\E}{{\mathbb{E}}}

\newcommand{\R}{{\mathbb{R}}}

\renewcommand{\epsilon}{\varepsilon}
\renewcommand{\phi}{\varphi}

\newcommand{\be}{\begin{equation}}
\newcommand{\ee}{\end{equation}}
\newcommand{\bin}{\operatorname{Bin}}
\begin{document}

\title[Propagation of chaos in the critical Curie-Weiss model]{When does propagation of chaos in the critical Curie-Weiss model stop?}

\author[Nina Gantert]{Nina Gantert}
\address[Nina Gantert]{Technische Universit\"at M\"unchen, SoCIT, Department of Mathematics,
Boltzmannstr. 3,
85748 Garching bei M\"unchen,
Germany}

\email[Nina Gantert]{gantert@ma.tum.de}

\author[Matthias L\"owe]{Matthias L\"owe}
\address[Matthias L\"owe]{Fachbereich Mathematik und Informatik,
University of M\"unster,
Einsteinstra\ss e 62,
48149 M\"unster,
Germany}

\email[Matthias L\"owe]{maloewe@math.uni-muenster.de}

\author[Kilian Schnappinger]{Kilian Schnappinger}
\address[Kilian Schnappinger]{Technische Universit\"at M\"unchen, SoCIT, Department of Mathe\-matics,
Boltzmannstr. 3,
85748 Garching bei M\"unchen,
Germany}

\email[Kilian Schnappinger]{schnappingerkilian@gmail.com}


\date{\today}

\subjclass[2000]{Primary: 82C32, 60K35, Secondary: 68T05, 92B20}

\keywords{Curie-Weiss model, propagation of chaos, criticality }

\begin{abstract}
We study increasing propagation of chaos for the critical Curie-Weiss model (i.e.\ the mean-field Ising model at critical inverse temperature $\beta=1$, with no external field). We give a simple way to see that for windows of size $k(N)\ll \sqrt N$ we still have propagation of chaos (reproving earlier results, see e.g.\cite{BAZ_chaos}), while for $k(N)$ of order $\sqrt N$ the propagation of chaos breaks down. The law of a single spin converges to $\pi$, the Bernoulli law with parameter $1/2$. If  
$k(N) =\alpha \sqrt N$,
we give an explicit formula for the limiting distance in total variation of the law of the first $k(N)$ spins with respect to the $k$-fold product of $\pi$, as a function of $\alpha$. For even larger window sizes, the distribution of the spins has,  in the thermodynamical limit, maximal distance to the $k$-fold product of $\pi$. One of the ingredients of the proof is a result about the unimodality/non-unimodality of the law of the number of positive spins among the first $k=k(N)$ spins, which may be of independent interest.
\end{abstract}

\maketitle



\section{Introduction}
In statistical mechanics, the Curie-Weiss model serves as a mean-field description of ferromagnetic phenomena.
The model is characterized by a family of Gibbs probability measures $\mu_N$ defined on the configuration space $\{-1,+1\}^N$.
Two key parameters govern these measures: the inverse temperature $\beta>0$ and the external magnetic field strength $h\in\mathbb{R}$. For fixed values of $\beta>0$ and $h\in\mathbb{R}$, the Gibbs distribution is given by
\begin{equation} \label{eq:Gibbs}
\mu_N(\sigma):=\mu_{N,\beta,h}(\sigma):= \frac{1}{Z_N}\exp\left(\frac{\beta}{2N} \sum_{i,j=1}^N \sigma_i \sigma_j+h\sum_{i=1}^{N}\sigma_i\right), 
\end{equation}
where
\begin{equation}
\sigma= (\sigma_i)_{i=1}^N\in \{-1,+1\}^N\, .
\end{equation}
Here, $Z_N$ denotes the partition function, which serves as the normalization factor:
\begin{equation}
Z_N=Z_N(\beta,h)=\sum_{\sigma' \in \{-1,+1\}^N} \exp\left(\frac{\beta}{2N} \sum_{i,j=1}^N \sigma_i' \sigma_j'+h\sum_{i=1}^{N}\sigma_i'\right).
\end{equation}
Comprehensive treatments of the Curie-Weiss model and its principal asymptotic properties can be found in several textbooks~\cite{BovierSMoDS,EllisEntropyLargeDeviationsAndStatisticalMechanics,Velenik_book}, as well as in the research literature~\cite{Chatterjee_Shao,EL10,EllisNewman_80}. A central quantity in this model is the magnetization, which acts as the order parameter and is given by
\begin{equation*}
m_N:=m_N(\sigma) := \frac{1}{N} \sum_{i=1}^N \sigma_i=\frac{2\mathcal{P}_N}{N}-1,
\end{equation*}
where $\mathcal{P}_N:=\mathcal{P}_N(\sigma):=|\{i\in\{1,\ldots,N\}:\sigma_i=+1\}|$ counts the spins with positive orientation. We denote by $\mu_N\circ m_N^{-1}$ the induced probability distribution of the magnetization $m_N$ under $\mu_N$. The limiting behavior of the magnetization in the thermodynamic limit is characterized by
\begin{align}\label{eq:mconvergence}
\mu_N\circ m_N^{-1} \Rightarrow
\begin{cases}
\delta_{{\tt m}(\beta,h)},& \text{if}\quad h\neq 0\quad\text{or}\quad 0<\beta \le 1,\\ 
\frac{1}{2}\left(\delta_{{\tt m}(\beta,0)}+ \delta_{-{\tt m}(\beta,0)}\right),& \text{if}\quad h=0\quad\text{and}\quad \beta>1,
\end{cases}
\end{align}
revealing a phase transition at the critical inverse temperature $\beta=1$ when the external field vanishes ($h=0$). In this expression, $\Rightarrow$ signifies weak convergence, $\delta_x$ represents the Dirac delta measure concentrated at $x$, and ${\tt m}(\beta,h)$ is the solution of maximal absolute value of the equation
\begin{align}\label{eq:tanhh}
z=\tanh(\beta z+h).
\end{align}
More precisely: when $h>0$, there exists a unique positive solution; when $h<0$, a unique negative solution; and when $h=0$ with $0<\beta \leq 1$, the solution equals $0$. In the regime where $h=0$ and $\beta>1$, equation~\eqref{eq:tanhh} admits exactly two non-trivial solutions, namely ${\tt m}(\beta,0)$ and $-{\tt m}(\beta,0)$.
Another way to see that $\beta=1$ is the critical inverse temperature in the Curie-Weiss model is on the level of fluctuations of $m_N$. From now on, we fix
\[
h=0\, .
\]
Then, for $\beta <1$ the rescaled magnetization $\sqrt N m_N$ has a Gaussian limit. On the other hand, if $\beta=1$, we have to rescale $m_N$ by $N^{1/4}$ to obtain a non-standard limit law with density proportional to $e^{-\frac 1 {12} x^4}$.

A third, and closely related way to see the criticality of the model at $\beta=1$ is the so called increasing propagation of chaos result (cf. \cite{BAZ_chaos}).
Let us fix an arbitrary positive integer $k\in\mathbb{N}$ and select any collection of $k$ spins from the total of $N$ available spins. Due to the exchangeability of the spin configuration $(\sigma_i)_{i=1}^N$ with respect to the Gibbs measure $\mu_N$, without loss of generality we can focus our attention on the initial $k$ spins and analyze their joint marginal law $\mu^{(k)}_{N}= \mu^{(k)}_{N,\beta}$.
The propagation of chaos property for Gibbs measures, asserts that the joint distribution of any finite subset of $k$ spins converges to a product measure as the system size grows. More precisely,
with 
\begin{equation}
\pi: = \frac{1}{2}\delta_{-1}+\frac{1}{2}\delta_{+1}
\end{equation}\label{pi-def}
we have
\begin{equation}\label{prop-chaos}
d_{\mathrm{TV}}\Big(\mu_N^{(k)},\pi^{\otimes k}\Big)\to 0\, .
\end{equation}
The statement in \eqref{prop-chaos} is often referred to as \lq\lq propagation of chaos\rq\rq.
Our main goal will be to investigate if \eqref{prop-chaos} still holds true for $k=k(N)$ depending on $N$, which is referred to as \lq\lq increasing propagation of chaos\rq\rq.
We introduce the counting variable $\mathcal{P}_{k}:=|\{j\in\{1, \ldots,k\}:\sigma_j=+1\}|$, which counts the number of up-spins within the selected subset. Observe that
the quantity $\mathcal{P}_{k}$ provides a complete characterization of $\mu^{(k)}_{N,\beta}$, which allows us to equivalently analyze the law of $\mathcal{P}_{k}$
under $\mu_N$, denoted by $\mu_N\circ \mathcal{P}_{k}^{-1}$. More precisely,
\begin{equation}\label{meas-equiv-numb}
d_{\mathrm{TV}}\Big(\mu_N^{(k)},\pi^{\otimes k}\Big) = 
d_{\mathrm{TV}}\big(\mu_N\circ\mathcal P_{k}^{-1},\bin \big(k,\frac{1}{2}\big)\big)\, .
\end{equation}
Now, indeed in the Curie-Weiss model (and many other mean-field models) propagation of chaos holds in a  stronger sense: consider the situation, when $k=k(N)$ depends on $N$. If $k(N)\ll N$ (i.e.\ $\tfrac{k(N)}{N}\to 0$ for $N \to \infty$) and $\beta<1$, then still $\mu_N \circ \mathcal{P}_{k(N)}^{-1}$ is well approximated by a $\Bin(k(N), \frac 12)$-distribution. Similarly, if $k(N) \ll N$ and $\beta>1$, 
\begin{equation*}
\lim_{N\to\infty}d_{\mathrm{TV}}\Big(\mu_N\circ\mathcal P_{k(N)}^{-1},\frac{1}{2}\Bin\Big(k(N),\frac{1+{\tt m}(\beta,0)}{2}\Big)+\frac{1}{2}\Bin\Big(k(N),\frac{1-{\tt m}(\beta,0)}{2}\Big)\Big)=0.
\end{equation*}
Finally, for $\beta=1$ and $k(N) \ll \sqrt N$ we can again approximate
the distribution of $\mathcal{P}_{k(N)}$ by a $\Bin(k(N), \frac 12)$-distribution. All of this has implicitly or 
explicitly been shown in \cite{BAZ_chaos}. In \cite{JKLM}, the authors show that the bound $k(N)\ll N$ is sharp, i.e.\ that propagation of chaos breaks down for larger block sizes. The case $\beta=1$ is not treated there. 

The aim of the present paper is to close this gap. We will show increasing propagation of chaos in the Curie-Weiss model for $\beta=1$ and $k(N) \ll \sqrt N$ by elementary methods and prove that at $k(N)=\alpha N^{\frac 12}$ with $\alpha>0$ propagation of chaos breaks down (an idea why this might be true is already contained in \cite{JKL25}). We will also see that for  
$k(N)=\alpha N^{\gamma}$ with $\alpha>0, \gamma >\frac 12$ the distributions of $\mathcal{P}_{k(N)}$ and $\mathrm{Bin}(k(N), \frac 12)$ have maximal total variation distance in the limit $N \to \infty$. 
More precisely, we will show the following.
\begin{theorem}\label{thm:mainbefore}
Assume $h=0$ and $\beta=1$, and let $k=k(N)\leq N$ with $k(N)\to\infty$.
Then the following statements hold:
\begin{enumerate}
\item If $k(N)/N^{1/2}\to0$, then
$d_{\mathrm{TV}}\Big(\mu_N^{(k)},\pi^{\otimes k}\Big)\to 0$.
\item If $k(N)/N^{1/2}\to\alpha\in(0,\infty)$, then
\begin{equation*}
u(\alpha) : = \lim\limits_{N\to\infty}d_{\mathrm{TV}}\Big(\mu_N^{(k)},\pi^{\otimes k}\Big)  \text{ exists and } 0 < u(\alpha) < 1.
\end{equation*}
An explicit formula for $u(\alpha)$ is given in \eqref{formula-distance}.
\item If $k(N)/N^{1/2}\to\infty$, then
$
d_{\mathrm{TV}}\Big(\mu_N^{(k)},\pi^{\otimes k}\Big)\to1.
$
\end{enumerate}
\end{theorem}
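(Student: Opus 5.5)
The plan is to represent the law of $\mathcal P_k$ under $\mu_N$ as a mixture of binomials via the Hubbard--Stratonovich transform. Writing $\exp\big(\frac{1}{2N}(\sum_i\sigma_i)^2\big)=\sqrt{N/(2\pi)}\int e^{-Nx^2/2+x\sum_i\sigma_i}\,dx$, the spins under $\mu_N$ (with $\beta=1$, $h=0$) become, conditionally on a random field $x$, i.i.d.\ with $\mu(\sigma_i=+1\mid x)=e^x/(2\cosh x)$. Here $x$ has density proportional to $\exp(-N(x^2/2-\log\cosh x))$. Since $x^2/2-\log\cosh x = x^4/12+O(x^6)$, the rescaled field $N^{1/4}x$ converges in law (with good tails) to the density $c\,e^{-y^4/12}$. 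Hence
\[
\mu_N\circ\mathcal P_k^{-1}=\int \Bin\Big(k,\tfrac{1+\tanh x}{2}\Big)\,\rho_N(dx),
\]
and by \eqref{meas-equiv-numb} we must compare this mixture with $\Bin(k,\frac12)$.

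For part (1), convexity of $d_{\mathrm{TV}}$ in the first argument gives the bound $\int d_{\mathrm{TV}}(\Bin(k,p_x),\Bin(k,\frac12))\rho_N(dx)$. A Hellinger or $\chi^2$ estimate between binomials yields $d_{\mathrm{TV}}\le C\sqrt k\,|\tanh x|$. With $x\sim N^{-1/4}$, this is $O(\sqrt k N^{-1/4})\to0$ whenever $k\ll\sqrt N$. The uniform integrability of $N^{1/4}|x|$ comes from the quartic tail bound, using $x^2/2-\log\cosh x\ge c\min(x^4,x^2)$.

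For part (2), take $k=\alpha\sqrt N$ and $x=yN^{-1/4}$, so that $\sqrt k\tanh x\approx\sqrt\alpha\,y$. The local CLT for $\Bin(k,p_x)$ says that, on the scale $j=k/2+\frac{\sqrt k}{2}z$, the mixture is approximated in $L^1$ by the density $\int\phi(z-\sqrt\alpha y)\,c e^{-y^4/12}\,dy$, while $\Bin(k,\frac12)$ is approximated by $\phi(z)$. To turn pointwise local-CLT convergence into convergence of total variation, I will use Scheffé's lemma, with tightness coming from the tails of $y$. This gives
\[
u(\alpha)=\frac12\int_{\R}\Big|\int_\R \phi(z-\sqrt\alpha\,y)\,c\,e^{-y^4/12}dy-\phi(z)\Big|dz .
\]
It remains to show $0<u(\alpha)<1$. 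The bound $u(\alpha)<1$ holds because both densities are strictly positive everywhere. The bound $u(\alpha)>0$ holds because the mixture has strictly larger variance $1+\alpha\E y^2$, so it differs from $\phi$. This is presumably where the abstract's unimodality result is used to write $u$ explicitly via the crossing points of the two densities, but it is not needed for the inequalities. I expect the main technical obstacle to be this uniform local CLT / $L^1$ approximation, uniformly over $|y|\le M$ with $M$ large and with tail truncation.

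For part (3), take $k\gg\sqrt N$. Under the mixture, $(\mathcal P_k-k/2)/(\sqrt k/2)\approx \sqrt k\tanh x+O_P(1)$, and $\sqrt k\tanh x\approx \sqrt{k/\sqrt N}\,y$ with $y$ a non-degenerate random variable without atom at $0$. Hence this statistic tends to infinity in absolute value in probability. Under $\Bin(k,\frac12)$ it stays tight. Taking the event $A=\{|\mathcal P_k-k/2|\le M\sqrt k\}$ and letting first $N\to\infty$ and then $M\to\infty$ gives $d_{\mathrm{TV}}\to1$. When $k$ is comparable to $N$, one should check that $x$ is still of order $N^{-1/4}$, so that $p_x$ stays near $\frac12$ and the binomial concentration argument is unaffected.
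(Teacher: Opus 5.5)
Your proposal is correct, and it takes a genuinely different route from the paper in two places.

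\textbf{The representation.} The paper conditions on the empirical proportion $U_N$, which gives an exact hypergeometric mixture. It then replaces this by $\mathbb E[\operatorname{Bin}(k,U_N)]$ at a cost of $4k/N$, and gets the scale of $U_N-\frac12$ from the classical $N^{1/4}m_N$ limit theorem plus Stirling-based moment bounds. Your Hubbard--Stratonovich field instead gives an \emph{exact} binomial mixture whose mixing law $\rho_N$ does not depend on $k$. The tail and moment bounds for $N^{1/4}x$ then follow at once from $c\min(x^4,x^2)\le \frac{x^2}{2}-\log\cosh x\le \frac{x^4}{12}$. The upper bound comes from $(\frac{x^2}{2}-\log\cosh x)''=\tanh^2x\le x^2$ and controls the normalizing constant.

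\textbf{Part (2).} The paper proves only weak convergence of $(\mathcal P_k-k/2)/\sqrt k$, via characteristic functions. It upgrades this to convergence in total variation using two ingredients:
\begin{enumerate}
\item unimodality of the laws of $\mathcal P_k$, proved in the appendix via the Hubbard--Stratonovich representation, the GHS inequality and a log-concavity criterion;
\item a Scheff\'e-type theorem for unimodal densities (Hettmansperger--Klimko, Walker).
\end{enumerate}
So your guess about the role of unimodality is off. It is not used to locate crossing points; it performs exactly the weak-to-total-variation upgrade that your uniform local CLT plus Scheff\'e's lemma does directly.

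Your route is more elementary and self-contained. You need only the uniform local CLT for $\operatorname{Bin}(k,p)$ with $p\in[\frac14,\frac34]$, integrated against $\rho_N$; the region $|x|>\delta$ costs only $\sqrt k\,e^{-cN}$. Scheff\'e's lemma then needs just pointwise convergence of the step densities to a probability density, with no separate tightness. Your route also sidesteps the discretization step in the paper's Lemma~\ref{distances-unimod}. There, boxes of half-width strictly smaller than half the spacing leave gaps, so the resulting densities are not unimodal as claimed. In exchange, the paper's route yields the log-concavity/non-unimodality transition at $k\approx N/2$, which is of independent interest.

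\textbf{Part (3).} This is essentially the paper's argument. Your closing worry about $k$ comparable to $N$ is moot: $\rho_N$ is independent of $k$, and the conditional variance bound $kp_x(1-p_x)\le k/4$ holds for every $x$.
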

Due to the preceding discussion, see \eqref{meas-equiv-numb}, Theorem \ref{thm:mainbefore} is equivalent to Theorem \ref{thm:main1} below and it suffices to prove the latter.
\begin{theorem}\label{thm:main1}
Assume $h=0$ and $\beta=1$, and let $k=k(N)\leq N$ with $k(N)\to\infty$.
Then the following statements hold:
\begin{enumerate}
\item If $k(N)/N^{1/2}\to0$, then
$d_{\mathrm{TV}}\left(\mu_N\circ\mathcal P_{k(N)}^{-1},
\bin \left(k(N),\frac{1}{2}\right)\right)\to 0$.
\item If $k(N)/N^{1/2}\to\alpha\in(0,\infty)$, then
\begin{equation*}
u(\alpha) = \lim\limits_{N\to\infty}d_{\mathrm{TV}}\big(\mu_N\circ\mathcal P_{k(N)}^{-1},\bin \big(k(N),\frac{1}{2}\big)\big) \text{ exists and } 0 < u(\alpha) < 1.
\end{equation*}
An explicit formula for $u(\alpha)$ is given in \eqref{formula-distance}.
\item If $k(N)/N^{1/2}\to\infty$, then
$
d_{\mathrm{TV}}\left(\mu_N\circ\mathcal P_{k(N)}^{-1},
\bin \left(k(N),\frac{1}{2}\right)\right)\to1.
$
\end{enumerate}
\end{theorem}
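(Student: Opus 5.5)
The plan is to represent $\mu_N$ as a mixture of product measures, using the Hubbard--Stratonovich transform. At $\beta=1$, $h=0$ we write $\exp(\frac{1}{2N}(\sum\sigma_i)^2)=\E[\exp(Y\sum_i\sigma_i)]$ with $Y\sim\mathcal N(0,1/N)$. Then $\mu_N$ is the mixture over $t$ of the product measures $\bigotimes_i\frac{e^{t\sigma_i}}{2\cosh t}$, with mixing density proportional to $\exp(-N(\frac{t^2}{2}-\log\cosh t))$. Since $\frac{t^2}{2}-\log\cosh t=\frac{t^4}{12}+O(t^6)$, the rescaled variable $s=N^{1/4}t$ has mixing law $\nu_N$, which converges (with tails controlled uniformly, via $\frac{t^2}{2}-\log\cosh t\ge c\min(t^4,t^2)$) to the law $\nu$ with density proportional to $e^{-s^4/12}$. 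Consequently
\begin{equation*}
\mu_N\circ\mathcal P_k^{-1}=\int \bin\Big(k,\tfrac{1+\tanh(sN^{-1/4})}{2}\Big)\,\nu_N(ds).
\end{equation*}
Everything then reduces to comparing a mixture of binomials with $\bin(k,\frac12)$.

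For $k=\alpha\sqrt N$, the success probability is $p_s=\frac12+\frac{s}{2}N^{-1/4}+O(N^{-3/4})$. So the mean of $\mathcal P_k$ is shifted by $k(p_s-\frac12)\approx \frac{\alpha s}{2}N^{1/4}=\frac{\sqrt\alpha\, s}{2}\sqrt{k}$, which is exactly on the scale of the standard deviation $\sqrt k/2$. By a local CLT (uniform for $|s|\le M$, with variance $kp_s(1-p_s)=\frac k4(1+o(1))$), the point probabilities satisfy
\begin{equation*}
\frac{\sqrt k}{2}\,\P(\mathcal P_k=\tfrac k2+\tfrac{\sqrt k}{2}x)\to \int \phi(x-\sqrt\alpha\, s)\,\nu(ds)=:g_\alpha(x)
\end{equation*}
uniformly in $x$ on compacts, while the binomial side tends to $\phi(x)$, the standard normal density. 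Scheffé's lemma, together with tightness from the tails of $\nu_N$, then gives
\begin{equation*}
u(\alpha)=\frac12\int_{\R}\Big|\int\phi(x-\sqrt\alpha s)\,\nu(ds)-\phi(x)\Big|\,dx.
\end{equation*}
This formula gives $u(\alpha)<1$, since both densities are strictly positive. It gives $u(\alpha)>0$, since the mixture has variance $1+\alpha\,\E_\nu[s^2]>1$ and so differs from $\phi$. Parts (1) and (3) should follow from the same representation.

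For (1), I would use convexity of total variation and bound
\begin{equation*}
d_{\mathrm{TV}}\le\int d_{\mathrm{TV}}(\bin(k,p_s),\bin(k,\tfrac12))\,\nu_N(ds).
\end{equation*}
I would control the integrand by Hellinger or Kullback--Leibler distance: $\mathrm{KL}\approx 2k(p_s-\frac12)^2\approx\frac{k s^2}{2\sqrt N}\to0$. This bound is integrable against $\nu_N$ uniformly in $N$, using $|\tanh|\le1$ for large $s$.

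For (3), I would exhibit a separating event. Let $A=\{|\mathcal P_k-\frac k2|\le M\sqrt k\}$. Then $\bin(k,\frac12)(A)\to 1$ as $M\to\infty$. Under the mixture, conditionally on $s$ the deviation is about $\frac{s}{2}kN^{-1/4}\pm O(\sqrt k)$, and $kN^{-1/4}/\sqrt k=\sqrt{k/\sqrt N}\to\infty$. Hence $\mu_N(A)\le\nu_N(|s|\le\delta_N)+o(1)$ for some $\delta_N\to0$, and this tends to $0$. For $k$ close to $N$, where $t$ is not small, I would handle $\tanh(sN^{-1/4})$ directly; it is still monotone in $s$ and $|\tanh(tN^{-1/4})|\ge c|t|N^{-1/4}$ on the relevant range.

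The main obstacle I expect is making step (2) rigorous: the uniform local limit theorem over the mixing parameter, and the tail or uniform-integrability control needed to pass the limit inside both the mixture and the $L^1$ distance. One also has to match the lattice of $\mathcal P_k$ with the continuum. Scheffé's lemma reduces this to pointwise convergence of the rescaled mass functions plus convergence of total mass, which is the route I would take. The paper's hint about unimodality suggests the authors instead locate the crossing points of the two mass functions. My formula would still need to be matched with their \eqref{formula-distance}, and a crossing-point analysis could give it a closed form via two symmetric crossings at $\pm x_\alpha$.
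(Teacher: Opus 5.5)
Your proposal is correct, and it differs from the paper's proof in two genuine ways.

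\textbf{The representation.} The paper conditions on $\mathcal P_N$ to get the exact hypergeometric mixture $\E[\operatorname{Hyp}(N,NU_N,k)]$. It then pays $4k/N$ (Diaconis--Freedman) to pass to $\E[\bin(k,U_N)]$. The $N^{-1/4}$ scaling and the uniform moments of $U_N-\frac12$ come from the Ellis--Newman limit theorem plus a Stirling estimate (Lemma~\ref{lem:critical-moments}). Your Hubbard--Stratonovich representation instead makes $\mathcal P_k$ an exact binomial mixture whose mixing law $\nu_N$ does not depend on $k$. Its limit and tails can be read off directly from $c\min(t^4,t^2)\le\frac{t^2}{2}-\log\cosh t\le\frac{t^4}{12}$. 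The paper uses this transform only in the appendix.

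\textbf{Parts (1) and (3).} With your representation, these are essentially the paper's arguments: convexity plus Pinsker for (1), and for (3) the event $\{|\mathcal P_k-\frac k2|\le M\sqrt k\}$ played against a random mean shift of size $\sqrt{k/\sqrt N}\cdot\sqrt k$. Your remark about $k$ close to $N$ is unnecessary, since $\nu_N$ does not depend on $k$. Only the exponentially small tail $|t|\gtrsim 1$ of $\nu_N$ needs separate care.

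\textbf{Part (2).} This is the more substantial difference.
\begin{itemize}
\item The paper proves only weak convergence of $(\mathcal P_k-\frac k2)/\sqrt k$ to $Z+\sqrt\alpha X$, via characteristic functions. It upgrades this to convergence in total variation using log-concavity of the law of $\mathcal P_k$ (Corollary~\ref{cor:unimodality-transition}, which rests on the GHS inequality) and a Hettmansperger--Klimko type lemma for unimodal laws.
\item You prove a local limit theorem for the mixture: de Moivre--Laplace uniformly for $p$ in a compact subset of $(0,1)$, plus uniform tightness of $\nu_N$. You then finish with Scheff\'e's lemma for the step-function densities.
\end{itemize}
Your route bypasses the whole unimodality/GHS appendix at the modest price of a uniform local CLT. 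The paper's route yields the unimodality transition at $k\approx N/2$ as a by-product.

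\textbf{Matching the formulas.} No crossing-point analysis is needed. Under $\nu$, the variable $s/2$ has density proportional to $e^{-4x^4/3}$, i.e.\ it has the law of $X$. The change of variables $x\mapsto x/2$ therefore turns your expression into $d_{\mathrm{TV}}(\mathcal L(Z+\sqrt\alpha X),\mathcal N(0,\frac14))$, which is exactly \eqref{formula-distance}.
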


The proof of Theorem~\ref{thm:main1} relies on an exact conditional
representation of the Curie--Weiss marginals in terms of the empirical
proportion of positive spins, together with the critical scaling of this random parameter.
In Section~2 we recall the exchangeability of the Curie--Weiss spins and
introduce the corresponding mixing variable $U_N$. We show that at criticality
($\beta=1$, $h=0$) the fluctuations of $U_N$ around $1/2$ occur on the scale
$N^{-1/4}$ and converge in distribution to a non-degenerate quartic law.

The threshold $k\asymp N^{1/2}$ results from a comparison of two fluctuation
scales. Conditionally on $U_N$, the number $\mathcal P_k$ of positive spins in
the subsystem has mean $kU_N$ and, in the range relevant for the threshold,
fluctuations of order $\sqrt{k}$. Since
\[
U_N-\frac12=O_{\mathbb P}(N^{-1/4}),
\]
the critical fluctuations of $U_N$ produce a random shift of the conditional
mean of order $kN^{-1/4}$. The ratio between this shift and the intrinsic
sampling fluctuations is therefore
\[
\frac{kN^{-1/4}}{\sqrt{k}}
=\sqrt{k}\,N^{-1/4}
=\left(\frac{k}{N^{1/2}}\right)^{1/2}.
\]
Thus the random shift is asymptotically invisible for $k\ll N^{1/2}$, remains
of order one for $k\asymp N^{1/2}$, and dominates for $k\gg N^{1/2}$. These
three cases correspond, respectively, to propagation of chaos, its breakdown
at the critical scale, and asymptotic separation from the product measure.

These three regimes are treated in Sections~3 and~4, while the necessary
estimates are collected in Section~2.
For the proof of the result for the critical scale, we need a statement about the unimodality of the law of the number of positive spins among the first $k=k(N)$ spins. This may be of independent interest and we give a more general result in Corollary \ref{cor:unimodality-transition} in the appendix.

\section{Structural facts about the Curie--Weiss model}

The dependence structure of the Curie--Weiss model can be described in terms
of the empirical proportion of positive spins. More precisely, conditionally
on the total number of positive spins, every configuration with this number of
positive spins is equally likely. Consequently, the number of positive spins
observed in a subsystem is hypergeometrically distributed.

This exact conditional representation allows us to separate the two sources
of randomness relevant for propagation of chaos: the fluctuations of the
empirical magnetization of the whole system and the randomness arising from sampling a
subsystem. For subsystems of size \(k=o(\sqrt N)\), sampling without
replacement can furthermore be approximated in total variation by independent
Bernoulli sampling. This will yield a binomial-mixture representation in the
regime in which we prove propagation of chaos.

Let
\[
\mathcal P_N:=\sum_{i=1}^N\mathbf 1_{\{\sigma_i=+1\}}
\qquad \text{and} \quad 
U_N:=\frac{\mathcal P_N}{N}=\frac{1+m_N}{2}.
\]
Since the Curie--Weiss Hamiltonian depends on the spin configuration only through the total magnetization, conditionally on $\mathcal P_N=s$, the configuration is uniformly distributed over the
$\binom Ns$ configurations containing exactly $s$ positive spins.
Therefore, for every $k\le N$,
\[
\mathcal P_k\mid \mathcal P_N=s \sim \operatorname{Hyp}(N,s,k).
\]
In other words the distribution of the positive spins under the Gibbs measure can be written as
\begin{equation}\label{eq:hypergeometric-mixture}
\mu_N\circ\mathcal P_k^{-1}=\mathbb E\!\left[
\operatorname{Hyp}(N,NU_N,k)\right].
\end{equation}
The representation \eqref{eq:hypergeometric-mixture} separates the two sources
of randomness that are relevant for the behaviour of the subsystem.
Conditionally on $U_N$, the distribution of $\mathcal P_k$ is 
hypergeometric, while the effect of the Curie--Weiss interaction is encoded in
the fluctuations of the random parameter $U_N$. We therefore next determine
the critical scale of these fluctuations.

We first prove

\begin{proposition}\label{prop:critical-scaling}
Let $\beta=1$ and $h=0$. Then
$N^{1/4}\left(U_N-\frac{1}{2}\right)\Rightarrow X$,
where $X$ has density
\[
f_X(x)=\frac{\exp\left(-\frac{4}{3}x^4\right)}
{\displaystyle\int_{\mathbb R}
\exp\left(-\frac{4}{3}y^4\right)\,dy},
\qquad x\in\mathbb R.
\]
\end{proposition}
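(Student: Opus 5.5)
The plan is to prove this by computing the law of $\mathcal P_N$ explicitly and then running a local-limit (Laplace-type) argument. This recovers the classical Ellis--Newman result for $\sqrt[4]{N}\,m_N$, with density proportional to $e^{-x^4/12}$, in the variable $U_N$.

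\textbf{Step 1: exact weights and Stirling.} Since $\sum_{i,j}\sigma_i\sigma_j=(Nm_N)^2$ and $m_N=2U_N-1$, we have for $s=0,\dots,N$
\[
\mu_N(\mathcal P_N=s)=\frac{1}{Z_N}\binom Ns \exp\Big(\frac{N}{2}\big(\tfrac{2s}{N}-1\big)^2\Big).
\]
Write $s=N/2+xN^{3/4}$, so that $U_N-\tfrac12=xN^{-1/4}$ and $m_N=2xN^{-1/4}$. Stirling's formula gives
\[
\binom Ns=\frac{2^N}{\sqrt{\pi N/2}}\,\exp\big(-N I(s/N)\big)\,(1+o(1))
\]
uniformly for $|s/N-1/2|\le \delta$, where $I(u)=u\log u+(1-u)\log(1-u)+\log 2$. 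In terms of $m=2u-1$ this reads $I=\tfrac{m^2}{2}+\tfrac{m^4}{12}+O(m^6)$. The quadratic term cancels exactly against the interaction term $\tfrac N2 m^2$; this cancellation is precisely what criticality $\beta=1$ means. What remains is
\[
-\frac{N m^4}{12}+O(Nm^6)=-\frac{16x^4}{12}+O(x^6N^{-1/2})=-\frac43x^4+o(1),
\]
locally uniformly in $x$. This produces the constant in the target density.

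\textbf{Step 2: Riemann sum.} The lattice spacing in $x$ is $N^{-3/4}$. For any fixed $a<b$,
\[
\sum_{s:\,x\in[a,b]} \binom Ns e^{\frac N2 m^2}\Big/\Big(\tfrac{2^N}{\sqrt{\pi N/2}}\,N^{3/4}\Big)\;\to\;\int_a^b e^{-\frac43x^4}\,dx .
\]

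\textbf{Step 3: tails.} I need to show that the contribution from $|x|>R$ is uniformly small relative to the normalization $2^N N^{-1/2}N^{3/4}$ as $R\to\infty$. Split into two regions.
\begin{itemize}
\item[(a)] For $R\le |x|\le \delta N^{1/4}$, i.e.\ $|m|\le 2\delta$: use the inequality $I(u)-\tfrac{m^2}{2}\ge c\,m^4$ for $|m|\le 1$, valid for some $c>0$ since $\tfrac{m^2}{2}+\sum_{k\ge2}\frac{m^{2k}}{2k(2k-1)}$ has all coefficients positive. Together with Stirling bounds (or the crude bound $\binom Ns\le C 2^N N^{-1/2}e^{-NI}$), the sum is dominated by $\int_R^\infty e^{-c' x^4}dx$.
\item[(b)] For $|m|>2\delta$: the function $\tfrac{m^2}{2}-I$ is strictly negative, so this region is exponentially small in $N$. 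The endpoints $s=0,N$ are handled by the trivial bound $\binom Ns\le 2^N e^{-NI(s/N)}$, which holds for all $s$.
\end{itemize}

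\textbf{Step 4: conclusion.} Steps 2 and 3 give convergence of $Z_N$ after normalization, and then $\mu_N(N^{1/4}(U_N-\tfrac12)\in[a,b])\to \int_a^b f_X$, which is weak convergence.

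The main obstacle is the uniform tail control in Step 3(a): the Taylor expansion must be replaced by a global inequality $I(u)-\tfrac{m^2}{2}\ge cm^4$ valid across the whole intermediate range. The positive-coefficient series for $I$ in $m$ gives this directly, with $c=1/12$.
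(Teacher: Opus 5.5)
Your proof is correct, but it takes a different route from the paper. The paper does not derive the scaling limit itself. It quotes the classical Ellis--Newman theorem $N^{1/4}m_N\Rightarrow M$, where $M$ has density proportional to $e^{-x^4/12}$. It then transfers this to $U_N$ via $N^{1/4}(U_N-\tfrac12)=\tfrac12N^{1/4}m_N$ and the change of variables $e^{-(2x)^4/12}=e^{-\frac43x^4}$.

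You instead reprove that theorem from the explicit law of $\mathcal P_N$. Your ingredients are the exact cancellation of the quadratic term at $\beta=1$, a Riemann-sum local limit on compact $x$-windows, and tail control from $I-\tfrac{m^2}{2}\ge\tfrac{m^4}{12}$ together with exponential smallness for $|m|>2\delta$.

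The citation buys brevity. Your argument is self-contained and in fact yields a local limit theorem. It also uses essentially the ingredients the paper needs anyway in the proof of Lemma~\ref{lem:critical-moments}: the bound $K(m)\ge m^4/12$, the Stirling lower bound $\mathcal Z_N\gtrsim 2^NN^{1/4}$, and the exponential estimate away from $m=0$. Your Step~3 already gives uniform quartic-exponential tails for $N^{1/4}(U_N-\tfrac12)$, so the proposition and the moment lemma could be obtained together.

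One minor imprecision: the Stirling prefactor is $(2\pi Nu(1-u))^{-1/2}$. This equals $(\pi N/2)^{-1/2}(1+o(1))$ only as $u\to\tfrac12$, not uniformly on a fixed window $|u-\tfrac12|\le\delta$. You use the sharp form only where $u-\tfrac12=O(N^{-1/4})$ and a bound with a constant elsewhere, so nothing breaks.
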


\begin{proof}
The classical limit theorem for the Curie--Weiss model at $\beta=1$ and $h=0$ states that
$N^{1/4}m_N\Rightarrow M$,
where $M$ has a density proportional to
$\exp\left(-\frac{x^4}{12}\right)$.
Since
$N^{1/4}\left(U_N-\frac{1}{2}\right)=
\frac{1}{2}N^{1/4}m_N$,
the continuous mapping theorem yields
\[
N^{1/4}\left(U_N-\frac{1}{2}\right)
\Rightarrow \frac{M}{2}.
\]
Writing $X=M/2$, we see that the density of $X$
is proportional to
$\exp\left(-\frac{(2x)^4}{12}\right)=
\exp\left(-\frac{4}{3}x^4\right)$.
\end{proof}

For the proof of propagation of chaos, convergence in distribution alone is
not sufficient. We will also need control of the first moment of the critical
fluctuations. The following moment estimate provides the required uniform
integrability.

\begin{lemma}\label{lem:critical-moments}
For every $p\geq 1$,
$\sup_{N\in\mathbb N}\mathbb E_{\mu_N}\big[|N^{1/4}m_N|^p
\big]<\infty$
and thus,
\[
\sup_{N\in\mathbb N}\mathbb E_{\mu_N}
\Big[\big|N^{1/4}(U_N-\frac{1}{2})\big|^p\Big]<\infty.
\]
In particular,
\[
N^{1/4}\mathbb E_{\mu_N}\left[|U_N-\frac{1}{2}|\right]
\longrightarrow\mathbb E[|X|]
\]
and
$\mathbb E_{\mu_N}\left[|U_N-\frac{1}{2}|\right]=O\left(N^{-1/4}\right)$.
\end{lemma}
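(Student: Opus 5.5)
The plan is to bound the moments of $N^{1/4}m_N$ uniformly through the Hubbard--Stratonovich transform and a tail estimate for the resulting quartic-type density. The Gaussian identity
\[
\exp\Big(\tfrac{N m_N^2}{2}\Big)=\sqrt{\tfrac{N}{2\pi}}\int_{\mathbb R}\exp\Big(-\tfrac{N y^2}{2}+N y m_N\Big)\,dy
\]
lets us realise, on an enlarged probability space, a Gaussian variable $W\sim\mathcal N(0,1/N)$ independent of $\sigma$ such that $m_N+W$ has density proportional to
\[
\exp\big(-N G(y)\big),\qquad G(y)=\tfrac{y^2}{2}-\log\cosh y .
\]
This is the classical Ellis--Newman device. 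Note that $N^{1/4}W$ has all moments bounded, in fact tending to $0$. By Minkowski's inequality it therefore suffices to show
\[
\sup_N \E\big[|N^{1/4}(m_N+W)|^p\big]<\infty .
\]

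Substituting $y=N^{-1/4}x$, the variable $N^{1/4}(m_N+W)$ has density proportional to $g_N(x)=\exp\big(-N G(N^{-1/4}x)\big)$. The key analytic input is a lower bound on $G$. We have $G(0)=0$ and $G'(y)=y-\tanh y$, which is positive for $y>0$, and $G$ is even. Taylor expansion gives $G(y)=y^4/12+O(y^6)$ near $0$, and $G(y)\ge y^2/2-|y|$ globally. Combining these, there is a constant $c>0$ with
\[
G(y)\ge c\min(y^4,y^2)\quad\text{for all } y .
\]
Hence $g_N(x)\le \exp\big(-c\min(x^4,\sqrt N x^2)\big)\le e^{-c\min(x^4,x^2)}$ for $N\ge1$, which is an integrable majorant with all moments finite. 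For the denominator, $g_N(x)\to e^{-x^4/12}$ pointwise by the Taylor expansion, so by dominated convergence $\int g_N$ converges to a positive limit and is therefore bounded below uniformly in large $N$. Finitely many small $N$ are trivial, since $|m_N|\le1$. This gives the uniform moment bound for $N^{1/4}m_N$, and the bound for $U_N-\tfrac12=m_N/2$ follows immediately.

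For the remaining claims, the $p=2$ bound gives uniform integrability of $N^{1/4}|U_N-\tfrac12|$. Proposition~\ref{prop:critical-scaling} together with the continuous mapping theorem gives $N^{1/4}|U_N-\tfrac12|\Rightarrow|X|$. Uniform integrability plus weak convergence yields convergence of the expectations to $\E|X|$, and the $O(N^{-1/4})$ statement is immediate.

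The main obstacle is the global lower bound on $G$: it must be uniform enough to dominate the integrand for all $x$, including the region $|x|\gtrsim N^{1/4}$. The plan is to handle this by splitting into $|y|\le1$, where Taylor expansion with remainder gives $G\ge y^4/20$, and $|y|>1$, where monotonicity of $G$ together with $G(y)\ge G(1)>0$ and $G(y)\ge y^2/2-|y|$ gives quadratic growth.
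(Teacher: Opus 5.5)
Your argument is correct, but it takes a genuinely different route from the paper.

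The paper estimates the discrete law $\P_{\mu_N}(m_N=m)\propto\binom{N}{N(1+m)/2}e^{Nm^2/2}$ directly. Stirling bounds reduce it to $e^{-NK(m)}$ with $K(m)=I(m)-m^2/2\ge m^4/12$. The partition function is bounded below by counting the order-$N^{3/4}$ lattice points with $|m|\le N^{-1/4}$. The region $|m|>1/2$ is handled by a separate exponential bound. The outcome is a uniform quartic tail $\P_{\mu_N}(N^{1/4}|m_N|\ge t)\le Ce^{-ct^4}$, from which all moments follow.

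You instead add an independent $\mathcal N(0,1/N)$ variable $W$. By the Hubbard--Stratonovich identity (the same one the paper uses in the appendix), this replaces the discrete law by the explicit density $\propto e^{-NG(y)}$. From there:
\begin{itemize}
\item a single global inequality $G(y)\ge c\min(y^4,y^2)$, which is exactly the bound on $q=G$ invoked in the proof of Corollary~\ref{cor:unimodality-transition}, gives an $N$-independent integrable majorant;
\item dominated convergence replaces the explicit lower bound on the partition function;
\item Minkowski's inequality removes the Gaussian at negligible cost.
\end{itemize}

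Your route avoids Stirling's formula, lattice-point counting and the separate large-deviation region, and it is shorter. The paper's route gives more information: an explicit uniform quartic tail for $N^{1/4}m_N$ itself. Your majorant yields only uniform Gaussian-type tails for the smoothed variable, though that is exactly enough for the moment bounds the lemma needs.

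Your setup also has a side benefit. The pointwise convergence $g_N\to e^{-x^4/12}$ under an integrable majorant gives, via Scheff\'e's lemma, convergence of $N^{1/4}(m_N+W)$ in total variation to the quartic law. Combined with $N^{1/4}W\to 0$, this recovers Proposition~\ref{prop:critical-scaling}, so you would not need to cite the classical limit theorem.
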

\begin{proof}
Let $\mathcal M_N=\left\{-1,-1+\frac{1}{N},\ldots,1-\frac{1}{N},1
\right\}$
denote the set of possible values of the empirical magnetization. At the
critical point $\beta=1$ and $h=0$, its distribution is given by
\[
\mathbb P_{\mu_N}(m_N=m)
=\frac{1}{\mathcal Z_N}\binom{N}{\frac{N(1+m)}{2}}
\exp\left(\frac{N}{2}m^2\right),\qquad m\in\mathcal M_N,
\]
where
\[
\mathcal Z_N=\sum_{m\in\mathcal M_N}\binom{N}{\frac{N(1+m)}{2}}
\exp\left(\frac{N}{2}m^2\right).
\]

Define
\[
I(m)=\frac{1+m}{2}\log(1+m)
+\frac{1-m}{2}\log(1-m), \qquad m\in[-1,1],
\]
(with the convention $0\log 0=0$) and set
$$K(m):=I(m)-\frac{m^2}{2}.$$
The function $K$ is even and satisfies
$K(0)=K'(0)=0$
and
\[
K''(m)=\frac{1}{1-m^2}-1=\frac{m^2}{1-m^2},
\qquad |m|<1,
\]
in particular, $K''(m)\ge m^2$.
By integration, we obtain
$K(m)\ge \frac{m^4}{12}$ for $m\in[-1,1]$.
Moreover, if $|m|\le 1/2$, then
$K''(m)\le\frac{4}{3}m^2$,
and hence $K(m)\le \frac{m^4}{9}$.

Let us derive a lower bound on $\mathcal Z_N$. Stirling's formula yields that there exist constants $c_0,C_0>0$ such that
\[
c_0N^{-1/2}e^{NH(q)} \le\binom{N}{Nq}
\le C_0N^{-1/2}e^{NH(q)}
\]
whenever $q\in[1/4,3/4]$ and $Nq\in\mathbb N$, where
\[
H(q)=-q\log q-(1-q)\log(1-q).
\]
Since
$H(\frac{1+m}{2})=\log 2-I(m)$,
it follows that, uniformly for $|m|\le 1/2$,
\[
c_0 2^N N^{-1/2}e^{-NK(m)}
\le
\binom{N}{\frac{N(1+m)}{2}}
\exp\left(\frac{N}{2}m^2\right)
\le
C_0 2^N N^{-1/2}e^{-NK(m)}.
\]

There are at least $c_1N^{3/4}$ points $m\in\mathcal M_N$ satisfying
$|m|\le N^{-1/4}$. For such $m$,
\[
NK(m)\le\frac{N}{9}m^4\le\frac{1}{9}.
\]
Hence
\[
\begin{aligned}
\mathcal Z_N&\ge
\sum_{\substack{m\in\mathcal M_N\\ |m|\le N^{-1/4}}} \binom{N}{\frac{N(1+m)}{2}} \exp\left(\frac{N}{2}m^2\right)
\ge c_2 2^N N^{-1/2}N^{3/4}=c_2 2^N N^{1/4}.
\end{aligned}
\]

Combining this lower bound with the upper Stirling estimate and the inequality
$K(m)\ge m^4/12$, we obtain, uniformly for $|m|\le 1/2$,
\begin{equation}\label{UB-m-small}
\mathbb P_{\mu_N}(m_N=m)
\le C_1N^{-3/4} \exp\left(-\frac{N}{12}m^4\right).
\end{equation}
It remains to control the region $|m|>1/2$. Since $K$ is continuous,
$K(0)=0$, and $K(m)>0$ for $m\neq0$, there exists $c_3>0$ such that $K(m)\geq c_3$ whenever $|m|\geq\frac12$.
Writing $q=(1+m)/2$ and using
$\binom{N}{Nq}\leq e^{NH(q)}$,
together with the lower bound
$\mathcal Z_N\geq c_2 2^N N^{1/4}$, we obtain
\[
\P_{\mu_N}(m_N=m) \leq C_1 N^{-1/4}e^{-NK(m)} \leq C_1 N^{-1/4}e^{-c_3N}
\]
for $|m|>1/2$. Since there are at most $N+1$ possible values of $m_N$,
\[
\P_{\mu_N}\left(|m_N|>\frac12\right) \leq C_1 N^{3/4}e^{-c_3N}
\leq C_2e^{-c_4N}
\]
for suitable constants $C_2,c_4>0$.
Set $Y_N:=N^{1/4}|m_N|$.
For every integer $j\ge 0$, the number of points $m\in\mathcal M_N$ satisfying $j\le N^{1/4}|m|<j+1$
is bounded by $C_3N^{3/4}$. Hence, by \eqref{UB-m-small},
\[
\mathbb P_{\mu_N}\left(j\le Y_N<j+1,\ |m_N|\le\frac{1}{2}
\right) \le C_4e^{-c_5j^4}.
\]
Together with the exponentially small contribution from $|m_N|>1/2$, this
implies that there exist constants $c,C>0$ such that
\[
\mathbb P_{\mu_N}(Y_N\ge t)
\le
Ce^{-ct^4}
\]
for every $t\ge0$, uniformly in $N$.

Finally, for every $p\ge1$, integration by parts gives
$\mathbb E[Y^p]= p\int_0^\infty t^{p-1}\mathbb P(Y\ge t)\,dt$
(for non-negative random variables $Y$) and thus
\[
\mathbb E_{\mu_N}[Y_N^p]=p\int_0^\infty t^{p-1}\mathbb P_{\mu_N}(Y_N\ge t)\,dt\le Cp\int_0^\infty t^{p-1}e^{-ct^4}\,dt <
\infty,
\]
uniformly in $N$. Therefore, $\sup_{N\in\mathbb N}
\mathbb E_{\mu_N} [|N^{1/4}m_N |^p]<\infty.$
Since $ N^{1/4}(U_N-\frac{1}{2})=
\frac{1}{2}N^{1/4}m_N$,
the corresponding moment bound for $U_N$ follows. In particular,
the sequence
$\{N^{1/4}|U_N-\frac{1}{2}|\}_{N\in\mathbb N}$
is uniformly integrable. Together with Proposition~\ref{prop:critical-scaling} this
implies
\[
N^{1/4}\mathbb E_{\mu_N}\left[\big|U_N-\frac{1}{2}\big|\right]
\longrightarrow
\mathbb E[|X|], 
\]
hence
$\mathbb E_{\mu_N}\left[ |U_N-\frac{1}{2}|\right]
=O\left(N^{-1/4}\right)$.
\end{proof}

\section{Propagation of chaos below the critical scale}
\label{sec:propagation}
In this section we prove part~(1) of Theorem~\ref{thm:main1}. 
We first recall a well known approximation for sampling without replacement by sampling with
replacement.

\begin{lemma}\label{lem:hyp-bin}
For every $N\in\mathbb N$, every $k\leq N$, and every
$s\in\{0,\ldots,N\}$,
\[
d_{\mathrm{TV}}\left(\operatorname{Hyp}(N,s,k),\bin \left(k,\frac{s}{N}\right)\right)
\leq\frac{4k }{N}.
\]
In particular, the estimate is uniform in $s$.
\end{lemma}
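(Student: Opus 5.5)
We will prove the bound with a coupling. Realise $\operatorname{Hyp}(N,s,k)$ and $\bin(k,s/N)$ on a common probability space by drawing from an urn with $N$ balls, of which $s$ are marked. Draw $k$ balls uniformly \emph{with} replacement, i.e.\ take i.i.d.\ indices $I_1,\ldots,I_k$ uniform on $\{1,\ldots,N\}$. The number of marked balls among $I_1,\ldots,I_k$ (counted with multiplicity) is exactly $\bin(k,s/N)$.

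On the event $D$ that the indices $I_1,\dots,I_k$ are pairwise distinct, the conditional law of the set $\{I_1,\dots,I_k\}$ is uniform over $k$-subsets of $\{1,\ldots,N\}$. Hence, conditionally on $D$, the marked count is $\operatorname{Hyp}(N,s,k)$.

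To turn this into a coupling of the two unconditioned laws, we construct the hypergeometric sample as follows. Keep the distinct indices among $I_1,\dots,I_k$ in order of first appearance. Then complete them to a uniform $k$-subset by adding fresh indices drawn uniformly without replacement from the unused ones. By symmetry, the resulting set is a uniform $k$-subset, so its marked count $H$ has law $\operatorname{Hyp}(N,s,k)$. On $D$ the two counts coincide. The coupling inequality therefore gives
\[
d_{\mathrm{TV}}\Big(\operatorname{Hyp}(N,s,k),\bin\big(k,\tfrac sN\big)\Big)\le \P(D^c).
\]

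It remains to bound the collision probability. By a union bound over pairs,
\[
\P(D^c)\le \binom k2\frac1N\le \frac{k^2}{2N}.
\]
This gives $k^2/(2N)$ rather than the stated $4k/N$, and for $k\gg1$ it is weaker. This is the main obstacle: the coupling argument alone yields a quadratic bound, whereas the claim is linear in $k$.

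To get the linear bound I would instead invoke the classical result of Diaconis--Freedman (finite exchangeable sequences), which gives exactly
\[
d_{\mathrm{TV}}\big(\text{law of }(\sigma_1,\dots,\sigma_k),\ \text{i.i.d.\ mixture}\big)\le \frac{4k}{N}
\]
for two-colour urns. Since the marked count is a function of the sampled sequence, this transfers to $\operatorname{Hyp}$ versus $\bin$. Their proof compares the sequence laws directly and sums the likelihood ratio
\[
\frac{(s)_j\,(N-s)_{k-j}}{(N)_k}\Big/\Big(\big(\tfrac sN\big)^j\big(1-\tfrac sN\big)^{k-j}\Big)
\]
over $j$. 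The point is to bound only the region where this ratio is below $1$, using $(s)_j\ge s^j\big(1-\binom j2/s\big)$-type estimates, and then to control the resulting expectation $\E\big[\binom J2/s+\binom{k-J}2/(N-s)\big]$ under the binomial law of $J$. Using $\E\binom J2=\binom k2 (s/N)^2$, this expectation is $O(k^2/N)$. Obtaining the sharp linear form needs the more careful two-sided bound of Diaconis--Freedman, and I would cite that result for the constant $4$.

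Uniformity in $s$ holds because none of these bounds depends on $s$. For the application in Section~\ref{sec:propagation} with $k=o(\sqrt N)$, the elementary quadratic bound $k^2/(2N)\to0$ already suffices.
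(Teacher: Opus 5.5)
Your proposal ends up where the paper does: the paper's proof is just a citation of the classical sampling-with-versus-without-replacement bound (Diaconis--Freedman, Theorem~4, or Ehm, Theorem~2), and your argument for the stated linear bound $4k/N$ also rests on citing Diaconis--Freedman; your self-contained coupling is correct but only gives $\P(D^c)\le\binom k2/N$. That weaker bound does suffice for Section~\ref{sec:propagation}, but it would not suffice in the critical window of Section~\ref{sec:breakdown}: there $k\sim\alpha\sqrt N$ gives $k^2/(2N)\to\alpha^2/2\neq0$, so the cited linear bound is genuinely needed.
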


\begin{proof}
This is the classical total variation bound for sampling with and without
replacement; see, for example, \cite[Theorem~4]{DiaconisFreedman1980} or \cite[Theorem~2]{Ehm1991}.
\end{proof}

Applying Lemma~\ref{lem:hyp-bin} conditionally on $U_N$ and using
\eqref{eq:hypergeometric-mixture}, we obtain
\[
d_{\mathrm{TV}}\left(\mu_N\circ\mathcal P_k^{-1},
\mathbb E\left[\bin (k,U_N)\right]\right)
\leq \frac{4k}{N}.
\]

We next control the effect of replacing the random parameter $U_N$ by
$1/2$.

\begin{lemma}\label{lem:binomial-parameter}
For every $k\in\mathbb N$ and every $p\in[0,1]$,
\[
d_{\mathrm{TV}}\Big(\bin (k,p),\bin \Big(k,\frac{1}{2}\Big)\Big)
\leq \sqrt{2k }\Big|p-\frac{1}{2}\Big|.
\]
\end{lemma}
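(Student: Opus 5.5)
The plan is to bound total variation by Hellinger distance, or more directly via Pinsker's inequality and KL divergence, since both tensorize over product measures. Write $\bin(k,p)$ as the law of a sum of $k$ i.i.d.\ Bernoulli$(p)$ variables. Total variation is non-increasing under measurable maps, in particular under summation. Hence $d_{\mathrm{TV}}(\bin(k,p),\bin(k,\tfrac12))\le d_{\mathrm{TV}}(\mathrm{Ber}(p)^{\otimes k},\mathrm{Ber}(\tfrac12)^{\otimes k})$.

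For the product measures I will apply Pinsker's inequality,
\[
d_{\mathrm{TV}}(P,Q)\le\sqrt{\tfrac12 D(P\|Q)},
\]
with $P=\mathrm{Ber}(p)^{\otimes k}$ and $Q=\mathrm{Ber}(\tfrac12)^{\otimes k}$. I choose this direction because the reference measure $\mathrm{Ber}(\tfrac12)$ has full support, so the divergence is finite even at $p\in\{0,1\}$. Additivity of KL divergence gives $D(P\|Q)=k\,D(\mathrm{Ber}(p)\|\mathrm{Ber}(\tfrac12))$.

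The one-dimensional divergence is
\[
D(\mathrm{Ber}(p)\|\mathrm{Ber}(\tfrac12))=\log 2-H(p),
\]
with $H$ as in the proof of Lemma~\ref{lem:critical-moments}. Writing $p=\frac{1+m}{2}$, this equals $I(m)$ from that proof. The key step is the bound $I(m)\le m^2$ on $[-1,1]$. The inequality $\log(1+x)\le x$ gives
\[
I(m)\le \tfrac{1+m}{2}\,m-\tfrac{1-m}{2}\,m=m^2 .
\]
This bound also holds at the endpoints, where $I(\pm1)=\log 2\le 1$.

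Since $m=2(p-\tfrac12)$, we get $D(\mathrm{Ber}(p)\|\mathrm{Ber}(\tfrac12))\le 4(p-\tfrac12)^2$. Combining the pieces,
\[
d_{\mathrm{TV}}\le\sqrt{\tfrac12\cdot k\cdot 4(p-\tfrac12)^2}=\sqrt{2k}\,|p-\tfrac12|,
\]
which is exactly the claimed constant. The only point needing care is this elementary convexity bound $I(m)\le m^2$, including its behaviour at the boundary; everything else is standard. As a fallback, a chi-square bound gives a weaker but sufficient estimate: $\chi^2(\mathrm{Ber}(p)\|\mathrm{Ber}(\tfrac12))=4(p-\tfrac12)^2$, and tensorization gives $(1+4\delta^2)^k-1$, which is fine for small $\sqrt k\,\delta$. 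I will use the Pinsker route because it gives the stated bound directly and uniformly in $p$.
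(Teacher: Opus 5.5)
Your proposal is correct and matches the paper's proof: TV contraction under the summation map, then Pinsker's inequality with tensorization of relative entropy, then the bound $H\bigl(\operatorname{Ber}(p)\mid\operatorname{Ber}(\tfrac12)\bigr)\le 4(p-\tfrac12)^2$. The paper obtains that last bound from $\log x\le x-1$, which is the same estimate as your $\log(1+x)\le x$ applied to $I(m)$.
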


\begin{proof}
Let $\operatorname{Ber}(p)$ denote the Bernoulli distribution with parameter $p$. Since the binomial distribution is the image of the corresponding product measure under the summation map, we obtain
\[
d_{\mathrm{TV}}\Big(\bin (k,p),\bin \Big(k,\frac{1}{2}\Big)\Big)\leq d_{\mathrm{TV}}\Big( \operatorname{Ber}(p)^{\otimes k},
\operatorname{Ber}\Big(\frac{1}{2}\Big)^{\otimes k} \Big).
\]
Pinsker's inequality and tensorization of relative entropy yield
\[
\begin{aligned}
d_{\mathrm{TV}}\Big(\operatorname{Ber}(p)^{\otimes k},\operatorname{Ber}\Big(\frac{1}{2}\Big)^{\otimes k}\Big)
&\leq\sqrt{\frac{k}{2}H\Big(\operatorname{Ber}(p)\mid\operatorname{Ber}\Big(\frac{1}{2}\Big)\Big)}.
\end{aligned}
\]
Hence,
\[
H\Big(\operatorname{Ber}(p)\mid \operatorname{Ber}\big(\frac{1}{2}\big)\Big)
= p\log(2p) + (1-p) \log(2(1-p)))\leq (2p-1)^2 = 4\big(p-\frac{1}{2}\big)^2
\]
where we used the inequality $\log x\leq x-1$.
Combining the preceding estimates proves the claim.
\end{proof}

We can now prove propagation of chaos below the critical scale.

\begin{proof}[Proof of Theorem~\ref{thm:main1}, part~(1)]
Recall from \eqref{eq:hypergeometric-mixture} that $\mu_N\circ\mathcal P_k^{-1}=\mathbb E\big[\operatorname{Hyp}(N,NU_N,k)\big]$.
Hence, by the triangle inequality,
\begin{multline*}
d_{\mathrm{TV}}\Big(\mu_N\circ\mathcal P_k^{-1},\bin \big(k,\frac12\big)
\Big) \leq
d_{\mathrm{TV}}\Big(
\mathbb E\big[\operatorname{Hyp}(N,NU_N,k)\big],
\mathbb E\big[\operatorname{Bin}(k,U_N)\big]
\Big)
\\+d_{\mathrm{TV}}\Big(\mathbb E\big[\operatorname{Bin}(k,U_N)\big],
\bin\big(k,\frac12\big)\Big).
\end{multline*}
By the joint convexity of total variation and
Lemma~\ref{lem:hyp-bin},
\begin{eqnarray*}
d_{\mathrm{TV}}\left(
\mathbb E\big[\operatorname{Hyp}(N,NU_N,k)\big],
\mathbb E\big[\bin(k,U_N)\big]
\right)
\leq
\mathbb E\left[
d_{\mathrm{TV}}\left(\operatorname{Hyp}(N,NU_N,k),
\Bin(k,U_N)\right)
\right]\cr
\leq \frac{4k}{N}.
\end{eqnarray*}
Again, by the convexity of total variation and Lemma~\ref{lem:binomial-parameter},
\begin{eqnarray*}
d_{\mathrm{TV}}\Big(\mathbb E\Big[\bin (k,U_N)\Big],
\bin \Big(k,\frac{1}{2}\Big)
\Big)
&\leq&
 \mathbb E\Big[d_{\mathrm{TV}}\Big(
\bin (k,U_N),\bin \Big(k,\frac{1}{2}\Big)
\Big) \Big]\cr
&\leq& \sqrt{2k }\,\mathbb E\left[\Big|U_N-\frac{1}{2}\Big|\right].
\end{eqnarray*}
Lemma~\ref{lem:critical-moments} gives
$\mathbb E\left[\Big|U_N-\frac{1}{2}\Big|\right]=O(N^{-1/4})$.
Therefore, $$d_{\mathrm{TV}}\left(\mu_N\circ\mathcal P_{k(N)}^{-1},
\bin \left(k(N),\frac{1}{2}\right)\right)   
\leq\frac{4k }{N}+
O\Big(\sqrt{k}\,N^{-1/4}\Big).$$

If $k=k(N)=o(N^{1/2})$, then $\frac{k}{N}\to 0$ as well as
and $\sqrt{k}\,N^{-1/4}=\Big(\frac{k}{N^{1/2}}\Big)^{1/2}
\to 0$.
Thus $d_{\mathrm{TV}}\left(\mu_N\circ\mathcal P_{k(N)}^{-1},
\bin \left(k(N),\frac{1}{2}\right)\right)   
\to 0$,
which proves part~(1) of Theorem \ref{thm:main1}.
\end{proof}

\section{Breakdown of propagation of chaos at criticality}
\label{sec:breakdown}

In this section we prove parts~(2) and~(3) of
Theorem~\ref{thm:main1}. At the critical scale
$k(N)\asymp N^{1/2}$, the fluctuations of the empirical
magnetization of the whole system are of the same order as the intrinsic fluctuations of the
number of positive spins in the subsystem. Above this scale, the fluctuations
of the magnetization of the whole system dominate and lead to asymptotic separation from
the product measure.

\subsection{The critical window}
\label{subsec:critical-window}

Let $k=k(N)$ satisfy $\frac{k(N)}{N^{1/2}}\longrightarrow\alpha$
for some $\alpha>0$. Recall from \eqref{eq:hypergeometric-mixture} that $\mu_N\circ\mathcal P_k^{-1}=\mathbb E \big[\operatorname{Hyp}(N,NU_N,k)\big]$.
Let $\widetilde{\mathcal P}_{N,k}$ be a random variable such that,
conditionally on $U_N$,
$\widetilde{\mathcal P}_{N,k}\mid U_N\sim\bin (k,U_N)$.
Lemma~\ref{lem:hyp-bin} yields
$d_{\mathrm{TV}}\big(\mu_N\circ\mathcal P_k^{-1},
\mathcal L(\widetilde{\mathcal P}_{N,k})
\big)\leq\frac{4k }{N}$.
Since $k\sim\alpha N^{1/2}$, the right-hand side converges to zero. It
therefore suffices to study the binomial mixture
$\mathcal L(\widetilde{\mathcal P}_{N,k})$.

The following lemma identifies its fluctuations.

\begin{lemma}\label{lem:critical-mixture-limit}
Let $k=k(N)$ satisfy $k/N^{1/2}\to\alpha\in(0,\infty)$, and suppose that
\[
W_N=N^{1/4}\left(U_N-\frac{1}{2}\right)\Rightarrow X.
\]
Then
\[
\frac{\widetilde{\mathcal P}_{N,k}-k/2}{\sqrt{k}}\Rightarrow Z+\sqrt{\alpha}\,X,
\]
where $Z\sim\mathcal N(0,1/4)$ is independent of $X$.
\end{lemma}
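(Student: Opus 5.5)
We split the centred and rescaled mixture variable into a conditional sampling fluctuation and a random shift of the conditional mean:
\[
\frac{\widetilde{\mathcal P}_{N,k}-k/2}{\sqrt{k}}
=\frac{\widetilde{\mathcal P}_{N,k}-kU_N}{\sqrt{k}}
+\sqrt{k}\Big(U_N-\frac12\Big)
=: A_N+B_N .
\]
Since $B_N=\sqrt{k}\,N^{-1/4}W_N$ and $\sqrt{k}\,N^{-1/4}=(k/N^{1/2})^{1/2}\to\sqrt{\alpha}$, Slutsky's lemma and the hypothesis $W_N\Rightarrow X$ give $B_N\Rightarrow\sqrt{\alpha}\,X$. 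The task is therefore to show that $(A_N,B_N)$ converges jointly to the independent pair $(Z,\sqrt\alpha X)$. We will prove this with characteristic functions.

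Fix $s,t\in\R$ and condition on $U_N$:
\[
\E\big[e^{isA_N+itB_N}\big]=\E\big[e^{itB_N}\,\phi_N(s;U_N)\big],
\]
where $\phi_N(s;p)$ denotes the characteristic function of $(\bin(k,p)-kp)/\sqrt{k}$. We will show that $\phi_N(s;p)\to e^{-s^2/8}$ uniformly over $p$ with $|p-\frac12|\le\delta_N$, for a suitable sequence $\delta_N\to0$, for instance $\delta_N=N^{-1/8}$. The explicit formula
\[
\phi_N(s;p)=\big(1-p+p\,e^{is/\sqrt k}\big)^k e^{-is\sqrt k\,p}
=\Big(1-\frac{p(1-p)s^2}{2k}+O\big(k^{-3/2}\big)\Big)^k
\]
holds with an $O$-term uniform in $p\in[0,1]$ for fixed $s$. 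Hence $\phi_N(s;p)=e^{-p(1-p)s^2/2}+o(1)$ uniformly in $p$, and $p(1-p)\to\frac14$ uniformly on $|p-\frac12|\le\delta_N$. Equivalently, we could appeal to the Lindeberg--Feller CLT for triangular arrays of Bernoulli$(p_N)$ variables with $p_N\to\frac12$, arguing along arbitrary deterministic sequences $p_N$.

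By Proposition~\ref{prop:critical-scaling} (or Lemma~\ref{lem:critical-moments} with Markov's inequality), $\P(|U_N-\frac12|>N^{-1/8})\to0$. Since $|\phi_N|\le1$, we obtain
\[
\E\big[e^{itB_N}\phi_N(s;U_N)\big]=e^{-s^2/8}\,\E\big[e^{itB_N}\big]+o(1)\longrightarrow e^{-s^2/8}\,\E\big[e^{it\sqrt\alpha X}\big].
\]
This is the characteristic function of $(Z,\sqrt\alpha X)$ with $Z\sim\mathcal N(0,1/4)$ independent of $X$. By Lévy's continuity theorem, $(A_N,B_N)$ converges jointly to this pair, and the continuous mapping $(a,b)\mapsto a+b$ gives the claim.

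The main obstacle is the uniformity of the conditional CLT in the random parameter $U_N$. It is handled by the explicit binomial characteristic function expansion together with the concentration of $U_N$ near $\frac12$ established earlier; the remaining steps are routine.
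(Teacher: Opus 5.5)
Your proposal is correct and follows essentially the same route as the paper, which also splits into the conditional sampling fluctuation $R_N=(\widetilde{\mathcal P}_{N,k}-kU_N)/\sqrt{k}$ plus $\frac{\sqrt k}{N^{1/4}}W_N$. Like you, it expands the conditional binomial characteristic function uniformly in $u\in[0,1]$ and uses $U_N\to\frac12$ in probability together with $|g_{k,t}|\le 1$ to factor the joint characteristic function. The differences are cosmetic: the paper proves joint convergence of $(R_N,W_N)$ and rescales afterwards, and the concentration of $U_N$ you need already follows from tightness of $W_N$ under the lemma's own hypothesis, so Proposition~\ref{prop:critical-scaling} is not needed.
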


\begin{proof}
Set
$R_N:=\frac{\widetilde{\mathcal P}_{N,k}-kU_N}{\sqrt{k}}$.
We first analyze the weak convergence of 
$(R_N,W_N)$ to $(Z,X)$ where $Z\sim\mathcal N(0,1/4)$ is independent of $X$.

For $t,s\in\mathbb R$, conditioning on $U_N$ gives
\[
\mathbb E\left[e^{itR_N+isW_N}\right]
=\mathbb E\left[e^{isW_N}\mathbb E\left[e^{itR_N}\,\middle|\,U_N\right]\right].
\]
For fixed $u\in[0,1]$, the conditional distribution of
$\widetilde{\mathcal P}_{N,k}$ given $U_N=u$ is $\operatorname{Bin}(k,u)$.
Therefore,
\[
\mathbb E\left[e^{itR_N}\,\middle|\,U_N=u\right]
=e^{-it\sqrt{k}u}\left(1-u+ue^{it/\sqrt{k}}\right)^k
=:g_{k,t}(u).
\]
Thus,
$
\mathbb E\big[e^{itR_N+isW_N}\big]
=\mathbb E\big[e^{isW_N}g_{k,t}(U_N)\big]$.

Writing $z:=it/\sqrt{k}$, we have
$\log g_{k,t}(u)=-it\sqrt{k}u+k\log\left(1+u(e^z-1)\right)$.

Since $e^z-1=z+\frac{z^2}{2}+O(k^{-3/2})$
and $\log(1+w)=w-\frac{w^2}{2}+O(|w|^3)$,
we obtain
\[
\begin{aligned}
k\log\left(1+u(e^z-1)\right)
&=ku\left(z+\frac{z^2}{2}\right)-\frac{k u^2z^2}{2}
+O\left((u+u^2+u^3)k^{-1/2}\right)\\
&=it\sqrt{k}\,u-\frac{t^2}{2}u(1-u)+O(k^{-1/2}),
\end{aligned}
\]
uniformly in $u\in[0,1]$ and therefore, 
$\log g_{k,t}(u)=-\frac{t^2}{2}u(1-u)+r_{k,t}(u)$,
where, for every fixed $t$,
$\sup_{u\in[0,1]}|r_{k,t}(u)|=O(k^{-1/2})$.
This yields
\[
g_{k,t}(u)
=\exp\left\{-\frac{t^2}{2}u(1-u)\right\}+o(1)
\]
uniformly in $u\in[0,1]$.

Since $(W_N)_N$ is tight and $U_N-\frac{1}{2}=N^{-1/4}W_N$,
we have $U_N\to 1/2$ in probability. It follows that
$g_{k,t}(U_N)\longrightarrow e^{-t^2/8}$
in probability. Moreover, $|g_{k,t}(U_N)|\leq1$, so the convergence also holds in $L^1$. Thus
\[
\mathbb E\left[e^{itR_N+isW_N}\right]
=e^{-t^2/8}\mathbb E\left[e^{isW_N}\right]+o(1)
\longrightarrow e^{-t^2/8}\mathbb E\left[e^{isX}\right].
\]
The limiting characteristic function factorizes into the characteristic
functions of $Z\sim\mathcal N(0,1/4)$ and $X$. Hence
$(R_N,W_N)\Rightarrow(Z,X)$,
where $Z$ and $X$ are independent.

Finally,
\[
\frac{\widetilde{\mathcal P}_{N,k}-k/2}{\sqrt{k}}
=R_N+\sqrt{k}\left(U_N-\frac{1}{2}\right)
=R_N+\frac{\sqrt{k}}{N^{1/4}}W_N.
\]
Since $\frac{\sqrt{k}}{N^{1/4}}\to\sqrt{\alpha}$,
the continuous mapping theorem yields
$\frac{\widetilde{\mathcal P}_{N,k}-k/2}{\sqrt{k}}
\Rightarrow Z+\sqrt{\alpha}\,X$.
\end{proof}

For the proofs of part~(2) and part~(3) of Theorem \ref{thm:main1} and for $b>0$, define
\[
A_N(b):=\Big\{\ell:\big|\ell-\frac{k}{2}\big|\geq b\sqrt{k}\Big\}.
\]

\begin{proof}[Proof of Theorem~\ref{thm:main1}, part~(2)]
Assume that $k/N^{1/2}\to\alpha\in(0,\infty)$ and set
\begin{equation}\label{Y_N-def}
Y_N:=\frac{\mathcal P_k-k/2}{\sqrt{k}}.
\end{equation}
By Lemma~\ref{lem:hyp-bin} and Lemma~\ref{lem:critical-mixture-limit},
$Y_N\Rightarrow Y_\alpha:=Z+\sqrt{\alpha}\,X$,
where $Z\sim\mathcal N(0,1/4)$ is independent of $X$.
Let $B_k\sim\operatorname{Bin}(k,1/2)$. By the central limit theorem,
\begin{equation}\label{B-k-def}
\frac{B_k-k/2}{\sqrt{k}}\Rightarrow Z.
\end{equation}
The distributions of $Y_\alpha$ and $Z$ are different, since
\[
\operatorname{Var}(Y_\alpha)=\frac14+\alpha\operatorname{Var}(X)>\frac14=\operatorname{Var}(Z).
\]
Consequently, since $Y_\alpha$ and $Z$ are centered, the distributions of $|Y_\alpha|$ and $|Z|$ cannot coincide.
The laws of $B_k$ are discrete and unimodal and the laws of $Y_N$ are discrete with the same support for $k=k(N)$, see \eqref{Y_N-def} and \eqref{B-k-def}, whereas $Y_\alpha$ and $Z$ both have densities with full support (i.e. the support is $\R$). Once we show that the laws of  $Y_N$ are unimodal (we refer to Corollary \ref{cor:unimodality-transition}), we can apply the following lemma.
\begin{lemma}\label{distances-unimod}
Let $(\nu_n)_{n \geq 1}$ and $(\tilde\nu_n)_{n \geq 1}$ be two sequences of probability measures such that for each $n$, $\nu_n$ and $\tilde\nu_n$ have the same finite support $S_n = \{y_1, \ldots, y_{b_n}\}$. Assume that all the 
$\nu_n$ and all the $\tilde\nu_n$ are unimodal. Assume further that the sequence $(\nu_n)_{n \geq 1}$ converges weakly to $\nu$ for $n \to \infty$ and 
the sequence $(\tilde\nu_n)_{n \geq 1}$ converges weakly to $\tilde\nu$ for $n \to \infty$ where $\nu$ and $\tilde\nu$ both have densities with full support.
Then, 
\begin{equation}\label{distance-conv}
\lim_{n\to\infty}d_{\mathrm{TV}}\big(\nu_n, \tilde\nu_n\big) = d_{\mathrm{TV}}(\nu, \tilde\nu)\, .
\end{equation}
\end{lemma}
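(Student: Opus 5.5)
The plan is to prove the two inequalities $\liminf_n d_{\mathrm{TV}}(\nu_n,\tilde\nu_n)\ge d_{\mathrm{TV}}(\nu,\tilde\nu)$ and $\limsup_n d_{\mathrm{TV}}(\nu_n,\tilde\nu_n)\le d_{\mathrm{TV}}(\nu,\tilde\nu)$ separately.

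\emph{Lower bound.} This needs neither unimodality nor the common support. I would use the variational formula
\[
d_{\mathrm{TV}}(\mu,\mu')=\tfrac12\sup\Big\{\int f\,d\mu-\int f\,d\mu' : f\in C_b(\R),\ \|f\|_\infty\le1\Big\}.
\]
For each fixed continuous $f$ the expression inside the supremum converges under weak convergence. A supremum of such limits is lower semicontinuous, which gives the $\liminf$ inequality.

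\emph{Preliminary facts for the upper bound.} Let $F_n,\tilde F_n,F,\tilde F$ be the distribution functions. Since $\nu,\tilde\nu$ have densities, $F,\tilde F$ are continuous, so P\'olya's theorem gives $F_n\to F$ and $\tilde F_n\to\tilde F$ uniformly. In particular, $\max_i\nu_n(\{y_i\})\to0$ and $\max_i\tilde\nu_n(\{y_i\})\to0$.

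\emph{Upper bound.} Fix $\varepsilon>0$ and choose real points $t_0<t_1<\dots<t_M$ with the following properties:
\begin{itemize}
\item $\nu$ and $\tilde\nu$ give mass less than $\varepsilon$ to the complement of $(t_0,t_M]$;
\item each cell $J_j=(t_{j-1},t_j]$ is so short that the densities of $\nu,\tilde\nu$ are almost constant on $J_j$ in $L^1$, i.e.\ $\sum_j\int_{J_j}|g-\bar g_j|<\varepsilon$, where $\bar g_j$ is the cell average, and likewise for $\tilde g$.
\end{itemize}
Full support guarantees that every cell has positive $\nu$- and $\tilde\nu$-mass. Write $P_j^n=\nu_n(J_j)$ and $\tilde P_j^n=\tilde\nu_n(J_j)$; these converge to $\nu(J_j)>0$ and $\tilde\nu(J_j)>0$. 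Let $c_j^n$ be the number of support points $y_i$ in $J_j$. This count is the same for both measures because the supports coincide, and this is exactly where the common-support hypothesis enters. Since the atoms are uniformly small and $P_j^n$ stays bounded below, $c_j^n\to\infty$.

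Now use unimodality, read in the index $i$ of the $y_i$. Away from the (at most two) cells containing the modes of $\nu_n$ and $\tilde\nu_n$, both mass functions are monotone. For instance, on the increasing side every atom of $\nu_n$ in $J_j$ is sandwiched between the atoms in the neighbouring cells. This gives
\[
\frac{P_{j-1}^n}{c_{j-1}^n}\le \nu_n(\{y_i\})\le \frac{P_{j+1}^n}{c_{j+1}^n},\qquad y_i\in J_j,
\]
and the same bound holds for $\tilde\nu_n$ with the same counts. Hence on each such cell both measures are, up to a small relative error, uniform over the same $c_j^n$ points, with total masses $P_j^n$ and $\tilde P_j^n$. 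This yields
\[
\sum_{y_i\in J_j}|\nu_n(\{y_i\})-\tilde\nu_n(\{y_i\})|\le |P^n_j-\tilde P^n_j|+\text{error}_j .
\]
The mode cells contribute at most $2\max_j(P_j^n+\tilde P_j^n)$, which is small when the mesh is small. Summing over $j$ and letting $n\to\infty$ gives a bound of $\sum_j|\nu(J_j)-\tilde\nu(J_j)|$ plus errors, and this sum is at most $2d_{\mathrm{TV}}(\nu,\tilde\nu)$. Finally let $\varepsilon\to0$.

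\emph{Main obstacle.} The hardest step is controlling error$_j$. The sandwich compares atoms in $J_j$ with the averages $P_{j\pm1}^n/c_{j\pm1}^n$, but the counts $c_{j-1}^n,c_j^n,c_{j+1}^n$ may be very unequal, since the $y_i$ can be spaced irregularly. So ``almost uniform on $J_j$'' does not follow directly.

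My first attempt would be to exploit that the counts are common to both measures. On a monotone stretch, both $\nu_n$ and $\tilde\nu_n$ are monotone in the index with the same ordering of points. I would therefore pass to the index scale: regroup the atoms into consecutive blocks of equal index-length rather than equal real length. The sandwich then bounds each atom by neighbouring block averages with equal counts, so the relative error is controlled by the ratios of consecutive block masses. It then remains to show that these ratios are close to $1$ for both measures simultaneously. Here the requirement is that the real-length of these index blocks is small and the limit densities are continuous and positive on $[t_0,t_M]$.

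If irregular spacing prevents this, I would instead compare each measure with its own index-scale limit. The monotone piecewise structure makes the ratio $\nu_n(\{y_i\})/\tilde\nu_n(\{y_i\})$ nearly constant on each block, which again reduces the atomwise sum to the blockwise sum $\sum_j|P^n_j-\tilde P^n_j|$.
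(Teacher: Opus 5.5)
Your lower bound (testing against $f\in C_b$) and the P\'olya step are correct. The upper bound, however, fails exactly at the ``main obstacle'' you flag, and neither of your two fixes can work: for irregularly spaced supports the statement is false.

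\emph{Counterexample.} Take $R=R_n\to\infty$, split $[0,1)$ into $I_l=[l/R,(l+1)/R)$ for $l=0,\dots,R-1$, and place $N_l=2^{R-l}$ equally spaced support points in $I_l$.
\begin{itemize}
\item Let $\nu_n$ give every point of $I_l$ mass $c/(RN_l)$.
\item Let $\tilde\nu_n$ give the left half of these points mass $\tfrac{2}{3}\,c/(RN_l)$ and the right half mass $\tfrac{4}{3}\,c/(RN_l)$.
\end{itemize}
Because $N_{l+1}=N_l/2$, both mass functions are nondecreasing in the index: the value $\tfrac{4}{3}c/(RN_l)$ at the end of $I_l$ equals $\tfrac{2}{3}c/(RN_{l+1})$ at the start of $I_{l+1}$. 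Moreover $\nu_n(I_l)=\tilde\nu_n(I_l)=c/R$ for every $l$.

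Add to both measures the same lattice tails on $[-T_n,0)$ and $[1,1+T_n]$, with $T_n\to\infty$. Give them atoms $ce^{x}\delta_n$ and $ce^{1-x}\delta_n'$ respectively, where $\delta_n\le 1/(2RN_0)$ and $\delta_n'=1/(RN_{R-1})$; this keeps monotonicity at the junctions. Take $c=1/3$ and normalize both measures by the same constant. Then:
\begin{itemize}
\item both measures are unimodal on the same support, and all atoms tend to $0$;
\item both converge weakly to the same law, whose density $ce^{x}$, $c$, $ce^{1-x}$ on $(-\infty,0)$, $[0,1)$, $[1,\infty)$ is everywhere positive;
\item but $|\nu_n(\{y_i\})-\tilde\nu_n(\{y_i\})|=\tfrac{1}{3}\nu_n(\{y_i\})$ for every $y_i\in[0,1)$, so $d_{\mathrm{TV}}(\nu_n,\tilde\nu_n)\to c/6=1/18\neq 0=d_{\mathrm{TV}}(\nu,\tilde\nu)$.
\end{itemize}

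\emph{Why your fixes fail.} In this example the ratio $\nu_n(\{y_i\})/\tilde\nu_n(\{y_i\})$ alternates between $3/2$ and $3/4$ on the scale $1/R_n$, that is, inside every fixed cell $J_j$. This happens even though both mass functions are monotone and the two measures agree on every $I_l$. The counts $c^n_{j\pm1}$ of neighbouring cells differ by unbounded factors, so your sandwich bound gives no information. Regrouping by index does not help either, because monotonicity does not force two measures to increase at the same indices. So your second claim, that the ratio is ``nearly constant on each block'', is precisely what fails.

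\emph{What rescues the argument.} You need an extra hypothesis on $S_n$, namely equal spacing. This holds in the paper's application, where $S_n=\{(j-k/2)/\sqrt k:0\le j\le k\}$.
\begin{itemize}
\item With a lattice of mesh $h_n\to0$ and cells of equal length $\delta$, you get $c^n_j=\delta/h_n+O(1)$.
\item Since $P^n_j/c^n_j$ lies in the same sandwich interval as each atom of $J_j$, you obtain $\sum_{y_i\in J_j}|\nu_n(\{y_i\})-P^n_j/c^n_j|\le |P^n_{j+1}-P^n_{j-1}|+o(1)$.
\item These errors telescope along each monotone stretch to at most $2\max_j P^n_j+o(1)$.
\end{itemize}
This finishes your proof, and you no longer need to approximate the densities by cell averages.

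The paper's own reduction has the same blind spot. It smears the atoms over disjoint intervals $[s_i-a_n,s_i+a_n]$ with $a_n<\frac12\min|s_{i+1}-s_i|$ and then invokes the result for unimodal densities. But the smeared densities have gaps and so are not unimodal. On a lattice, one should instead spread each atom over its full cell of width $h_n$; this gives unimodal step densities, to which Hettmansperger--Klimko/Walker applies.
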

The lemma implies that
\begin{equation}\label{formula-distance}
u(\alpha): = \lim_{N\to\infty}d_{\mathrm{TV}}\Big(\mu_N\circ\mathcal P_k^{-1},\Bin\big(k,\frac12\big)\Big) = d_{\mathrm{TV}}\Big(\nu_\alpha,\mathcal N(0,1/4)\Big)
\end{equation}
where $\nu_\alpha$ is the law of $Y_\alpha$.
Since $Y_\alpha$ and $Z$ have different laws, $u(\alpha) > 0$ and since both random variables have densities with full support, we also have $u(\alpha) < 1$.
\end{proof}
\begin{proof}[Proof of Lemma \ref{distances-unimod}]
Assume that
$(\eta_n)_{n \geq 1}$ and $(\tilde\eta_n)_{n \geq 1}$ are two sequences of probability measures such that for each $n$, $\eta_n$ and $\tilde \eta_n$ have densities with the same support and the densities of the $\eta_n$ and the $\tilde \eta_n$ are unimodal,
and the sequence $(\eta_n)_{n \geq 1}$ converges weakly to $\nu$ for $n \to \infty$ and 
the sequence $(\tilde\eta_n)_{n \geq 1}$ converges weakly to $\tilde\nu$ for $n \to \infty$ where $\nu$ and $\tilde\nu$ both have densities with full support. In this case, it is known that \eqref{distance-conv} holds true (with $d_{\mathrm{TV}}\big(\nu_n, \tilde\nu_n\big)$  replaced with $d_{\mathrm{TV}}\big(\eta_n, \tilde\eta_n\big)$ on the l.h.s.), see \cite{Hettmansperger-Klimko} and \cite{Walker}. The key point is that for probability measures with densities, the distance in total variation is given by the $L^1$-distance of the densities.
The discrete case follows by approximation: For each $n$, take $a_n< (1/2)\min\limits_{1\leq i \leq b_n-1}|s_{i+1} - s_i|$ and consider the probability measure $\eta_n$ with density $\nu_n(s_i)/(2a_n)$ on the intervals $[s_i - a_n, s_i + a_n]$ for  $1 \leq i \leq b_n$. Define, for each $n$, $\tilde\eta_n$ in the same way. Then the sequences
$(\eta_n)_{n \geq 1}$ and $(\tilde\eta_n)_{n \geq 1}$ satisfy the assumptions above and we have
$d_{\mathrm{TV}}\big(\nu_n, \tilde\nu_n\big) =d_{\mathrm{TV}}\big(\eta_n, \tilde\eta_n\big)$.
\end{proof}
\begin{proof}[Proof of Theorem~\ref{thm:main1}, part~(3)]
Assume that $k/N^{1/2}\to\infty$ and write
\[
Y_N:=\frac{\mathcal P_k-k/2}{\sqrt{k}}
=R_N+c_NW_N,
\]
where
\[
R_N:=\frac{\mathcal P_k-kU_N}{\sqrt{k}},
\qquad c_N:=\frac{\sqrt{k}}{N^{1/4}},
\qquad \text{and} \quad
W_N:=N^{1/4}\left(U_N-\frac12\right).
\]
Conditionally on $U_N$,
$\mathbb E[R_N\mid U_N]=0$
and
$\operatorname{Var}(R_N\mid U_N)
=U_N(1-U_N)\frac{N-k}{N-1}\leq\frac14$.

Hence $\mathbb E_{\mu_N}[R_N^2]\leq\frac14$,
so that $(R_N)_N$ is tight.
Moreover, $c_N\to\infty$, while $W_N\Rightarrow X$ and
$\mathbb P(X=0)=0$. It follows that
\[
c_N|W_N|\longrightarrow\infty
\qquad\text{in probability}.
\]
Indeed, for every $L>0$ and $\varepsilon>0$, for all sufficiently large $N$,
\[
\mathbb P(c_N|W_N|\leq L)\leq \mathbb P(|W_N|\leq\varepsilon),
\]
and therefore
\[
\limsup_{N\to\infty}\mathbb P(c_N|W_N|\leq L)
\leq \mathbb P(|X|\leq\varepsilon).
\]
Since this is true for all $\varepsilon$ the claim follows.

Now fix $b>0$. For every $M>0$,
\[
\mathbb P_{\mu_N}(|Y_N|\leq b)
\leq \mathbb P_{\mu_N}(|R_N|>M)
+\mathbb P_{\mu_N}(c_N|W_N|\leq b+M)
\]
by the triangle inequality. By Chebyshev's inequality,
\[
\mathbb P_{\mu_N}(|R_N|>M)\leq\frac{1}{4M^2},
\]
while $\mathbb P_{\mu_N}(c_N|W_N|\leq b+M)$ converges to zero. Thus
\[
\limsup_{N\to\infty}\mathbb P_{\mu_N}(|Y_N|\leq b)
\leq\frac{1}{4M^2}.
\]
Letting $M\to\infty$, we obtain
\[
\mathbb P_{\mu_N}(A_N(b))\longrightarrow1.
\]

On the other hand, if $B_k\sim\operatorname{Bin}(k,1/2)$, then the Central Limit Theorem gives
\[
\mathbb P_{\operatorname{Bin}(k,1/2)}(A_N(b))
\longrightarrow
\mathbb P(|Z|\geq b).
\]
Consequently, for every $b>0$,
\[
\liminf_{N\to\infty}
d_{\mathrm{TV}}\Big(\mu_N\circ\mathcal P_k^{-1},
\bin \Big(k,\frac12\Big)\Big)\geq 1-\mathbb P(|Z|\geq b).
\]
Finally, letting $b\to\infty$ yields
\[
d_{\mathrm{TV}}\Big(\mu_N\circ\mathcal P_k^{-1},
\operatorname{Bin}\Big(k,\frac12\Big)\Big)
\to 1.
\]
\end{proof}

\bibliographystyle{abbrv}
\bibliography{LiteraturDatenbank}    

@article {Hettmansperger-Klimko,
    AUTHOR = {Hettmansperger, Thomas P. and Klimko, Lawrence A.},
     TITLE = {A note on the strong convergence of distributions},
   JOURNAL = {Ann. Statist.},
  FJOURNAL = {The Annals of Statistics},
    VOLUME = {2},
      YEAR = {1974},
     PAGES = {597--598},
      ISSN = {0090-5364,2168-8966},
   MRCLASS = {62E20 (60F05)},
  MRNUMBER = {353541},
       URL =
              {http://links.jstor.org.tum-eaccess.de/sici?sici=0090-5364(197405)2:3<597:ANOTSC>2.0.CO;2-2&origin=MSN},
}

@article {Walker,
    AUTHOR = {Walker, Stephen G.},
     TITLE = {Comparing weak and strong convergence of density functions},
   JOURNAL = {Statist. Probab. Lett.},
  FJOURNAL = {Statistics \& Probability Letters},
    VOLUME = {200},
      YEAR = {2023},
     PAGES = {Paper No. 109878, 5},
      ISSN = {0167-7152,1879-2103},
   MRCLASS = {99-01},
  MRNUMBER = {4598015},
       DOI = {10.1016/j.spl.2023.109878},
       URL = {https://doi-org.tum-eaccess.de/10.1016/j.spl.2023.109878},
}

@article{DiaconisFreedman1980,
  author  = {Diaconis, Persi and Freedman, David},
  title   = {Finite Exchangeable Sequences},
  journal = {The Annals of Probability},
  volume  = {8},
  number  = {4},
  pages   = {745--764},
  year    = {1980},
  doi     = {10.1214/aop/1176994663}
}

@article{Ehm1991,
  author  = {Ehm, Werner},
  title   = {Binomial Approximation to the {P}oisson {B}inomial {D}istribution},
  journal = {Statistics \& Probability Letters},
  volume  = {11},
  number  = {1},
  pages   = {7--16},
  year    = {1991},
  doi     = {10.1016/0167-7152(91)90170-V}
}

@article {JKL25,
    AUTHOR = {Jalowy, Jonas and Kabluchko, Zakhar and L\"{o}we, Matthias},
     TITLE = {Propagation of chaos and residual dependence in {G}ibbs
              measures on finite sets},
   JOURNAL = {Math. Phys. Anal. Geom.},
  FJOURNAL = {Mathematical Physics, Analysis and Geometry. An International
              Journal Devoted to the Theory and Applications of Analysis and
              Geometry to Physics},
    VOLUME = {28},
      YEAR = {2025},
    NUMBER = {1},
     PAGES = {Paper No. 6, 25},
      ISSN = {1385-0172},
   MRCLASS = {82B05 (82B20)},
  MRNUMBER = {4879078},
       DOI = {10.1007/s11040-025-09503-5},
       URL = {https://doi.org/10.1007/s11040-025-09503-5},
}

@article{JKLM,
  title={When does the chaos in the {C}urie-{W}eiss model stop to propagate?},
  author={Jalowy, Jonas and Kabluchko, Zakhar and L{\"o}we, Matthias and Marynych, Alexander},
  journal={Electronic Journal of Probability},
  volume={28},
  pages={1--17},
  year={2023},
  publisher={The Institute of Mathematical Statistics and the Bernoulli Society}
}

@article {BAZ_chaos,
    AUTHOR = {Ben Arous, G. and Zeitouni, O.},
     TITLE = {Increasing propagation of chaos for mean field models},
   JOURNAL = {Ann. Inst. H. Poincar\'{e} Probab. Statist.},
  FJOURNAL = {Annales de l'Institut Henri Poincar\'{e}. Probabilit\'{e}s et
              Statistiques},
    VOLUME = {35},
      YEAR = {1999},
    NUMBER = {1},
     PAGES = {85--102},
      ISSN = {0246-0203},
   MRCLASS = {60F10 (60K35)},
  MRNUMBER = {1669916},
MRREVIEWER = {Hans-Otto Georgii},
       DOI = {10.1016/S0246-0203(99)80006-5},
       URL = {https://doi.org/10.1016/S0246-0203(99)80006-5},
}

@article {EllisNewman_80,
    AUTHOR = {Ellis, Richard S. and Newman, Charles M. and Rosen, Jay S.},
     TITLE = {Limit theorems for sums of dependent random variables
              occurring in statistical mechanics. {II}. {C}onditioning,
              multiple phases, and metastability},
   JOURNAL = {Z. Wahrsch. Verw. Gebiete},
  FJOURNAL = {Zeitschrift f\"{u}r Wahrscheinlichkeitstheorie und Verwandte
              Gebiete},
    VOLUME = {51},
      YEAR = {1980},
    NUMBER = {2},
     PAGES = {153--169},
      ISSN = {0044-3719},
   MRCLASS = {82A05 (60F05)},
  MRNUMBER = {566313},
MRREVIEWER = {D. Sz\'{a}sz},
       DOI = {10.1007/BF00536186},
       URL = {https://doi.org/10.1007/BF00536186},
}

@article {EL10,
    AUTHOR = {Eichelsbacher, Peter and L{\"o}we, Matthias},
     TITLE = {Stein's method for dependent random variables occurring in
              statistical mechanics},
   JOURNAL = {Electron. J. Probab.},
  FJOURNAL = {Electronic Journal of Probability},
    VOLUME = {15},
      YEAR = {2010},
     PAGES = {no. 30, 962--988},
      ISSN = {1083-6489},
   MRCLASS = {60F05 (82B20)},
  MRNUMBER = {2659754 (2011g:60041)},
MRREVIEWER = {Zhong Gen Su},
       DOI = {10.1214/EJP.v15-777},
       URL = {http://dx.doi.org/10.1214/EJP.v15-777},
}

@article {Chatterjee_Shao,
    AUTHOR = {Chatterjee, Sourav and Shao, Qi-Man},
     TITLE = {Nonnormal approximation by {S}tein's method of exchangeable
              pairs with application to the {C}urie-{W}eiss model},
   JOURNAL = {Ann. Appl. Probab.},
  FJOURNAL = {The Annals of Applied Probability},
    VOLUME = {21},
      YEAR = {2011},
    NUMBER = {2},
     PAGES = {464--483},
      ISSN = {1050-5164},
   MRCLASS = {60F05 (60G09)},
  MRNUMBER = {2807964 (2012b:60102)},
MRREVIEWER = {Irene Crimaldi},
       DOI = {10.1214/10-AAP712},
       URL = {http://dx.doi.org/10.1214/10-AAP712},
}

@book{BovierSMoDS,
author="Bovier, Anton",
title="{Statistical Mechanics of Disordered Systems - A Mathematical Perspective}",
publisher="Cambridge Series in Statistical and Probabilistic Mathematics",
year="2006",
isbn="0-521-84991-8",
}

@article {GHS70,
    AUTHOR = {Griffiths, Robert B. and Hurst, C. A. and Sherman, S.},
     TITLE = {Concavity of magnetization of an {I}sing ferromagnet in a
              positive external field},
   JOURNAL = {J. Mathematical Phys.},
  FJOURNAL = {Journal of Mathematical Physics},
    VOLUME = {11},
      YEAR = {1970},
     PAGES = {790--795},
      ISSN = {0022-2488,1089-7658},
   MRCLASS = {81.05},
  MRNUMBER = {266507},
       DOI = {10.1063/1.1665211},
       URL = {https://doi-org.tum-eaccess.de/10.1063/1.1665211},
}

@book {Velenik_book,
    AUTHOR = {Friedli, S. and Velenik, Y.},
     TITLE = {Statistical mechanics of lattice systems},
 PUBLISHER = {Cambridge University Press, Cambridge},
      YEAR = {2018},
     PAGES = {xix+622},
      ISBN = {978-1-107-18482-4},
   MRCLASS = {82-01 (82B05)},
  MRNUMBER = {3752129},
}

@book {EllisEntropyLargeDeviationsAndStatisticalMechanics,
    AUTHOR = {Ellis, Richard S.},
     TITLE = {Entropy, large deviations, and statistical mechanics},
    SERIES = {Classics in Mathematics},
      NOTE = {Reprint of the 1985 original},
 PUBLISHER = {Springer-Verlag},
   ADDRESS = {Berlin},
      YEAR = {2006},
     PAGES = {xiv+364},
      ISBN = {978-3-540-29059-9; 3-540-29059-1},
   MRCLASS = {82-02 (60F10 60K35 82B05)},
  MRNUMBER = {2189669 (2006m:82002)},
}

\appendix
\section{Unimodality of the law of the number of positive spins among the first $k=k(N)$ spins}
For the application of Lemma \ref{distances-unimod} we need to analyze 
the distribution of the number of positive spins in a subsystem, in particular we need to know whether it is unimodal. Define
\[
p_{N,k}(j):=\mu_N(\mathcal P_k=j),
\qquad j=0,\ldots,k,
\]
\[
\rho_{N,k}(dx):= \frac{1}{Z_{N,k}} e^{-Nx^2/2}(2\cosh x)^{N-k}\,dx.
\]
where $Z_{N,k}$ is a normalization factor. For $k$ is odd, we also introduce the probability measure
\[
\widehat\rho_{N,k}(dx):=
\frac{\cosh x}{\mathbb E_{\rho_{N,k}}[\cosh X]}\rho_{N,k}(dx).
\]

\begin{proposition}\label{prop:unimodality-criterion}
For every $N$ and $k\leq N$, the following is true
	
\begin{enumerate}
\item If $k$ is even, then $(p_{N,k}(j))_{j=0}^k$ is log-concave, and hence
unimodal, if and only if
\begin{equation}\label{eq:sinh^2gerade}
k\,\mathbb E_{\rho_{N,k}}[\sinh^2X]\le 1.
\end{equation}
If the reverse strict inequality holds, then the midpoint $k/2$ is a strict
local minimum and the distribution is not unimodal.
		
\item If $k$ is odd, then $(p_{N,k}(j))_{j=0}^k$ is log-concave, and hence
unimodal, if and only if
\begin{equation}\label{eq:sinh^2ungerade}
(k-1)\mathbb E_{\widehat\rho_{N,k}}[\sinh^2X]\le 1.
\end{equation}
If the reverse strict inequality holds, then the two midpoints $(k-1)/2$ and $(k+1)/2$ are a
local minimum and the distribution is not unimodal.
\end{enumerate}
\end{proposition}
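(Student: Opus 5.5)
The plan is to linearize the quadratic interaction with a Gaussian (Hubbard--Stratonovich) transform, which writes $p_{N,k}$ as a binomial coefficient times a moment generating function of $\rho_{N,k}$. Log-concavity then becomes a comparison of second differences, and a GHS-type concavity bound should show that the midpoint is the worst case.

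\emph{Step 1 (representation).} Use the identity
\[
e^{S^2/(2N)}=\sqrt{\tfrac{N}{2\pi}}\int_{\mathbb R} e^{-Nx^2/2+xS}\,dx ,\qquad S=\sum_{i}\sigma_i .
\]
Sum over the last $N-k$ spins, which produces $(2\cosh x)^{N-k}$. Then sum over configurations of the first $k$ spins with $\mathcal P_k=j$, whose spin sum is $2j-k$. This gives
\[
p_{N,k}(j)=C_{N,k}\binom kj\, a(2j-k),\qquad a(m):=\mathbb E_{\rho_{N,k}}\big[e^{mX}\big]=\mathbb E_{\rho_{N,k}}[\cosh(mX)],
\]
where the last equality holds because $\rho_{N,k}$ is even. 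Put $\varphi:=\log a$. Since all $p_{N,k}(j)>0$, log-concavity means that for $1\le j\le k-1$, with $m=2j-k$,
\[
\Delta(m):=\varphi(m+2)+\varphi(m-2)-2\varphi(m)\;\le\; \log\Big(1+\tfrac1j\Big)+\log\Big(1+\tfrac1{k-j}\Big)=:b_k(j). \tag{$*$}
\]

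\emph{Step 2 (the midpoint computation).} For $k$ even and $j=k/2$ we have $m=0$. Using $\cosh 2x=1+2\sinh^2x$, condition $(*)$ reads $\tfrac{k+2}{k}\ge 1+2\mathbb E_{\rho_{N,k}}[\sinh^2X]$, which is exactly \eqref{eq:sinh^2gerade}. For $k$ odd and $j=(k-1)/2$ we have $m=-1$, and the neighbours have $m=-3$ and $m=1$. Using $a(1)=a(-1)$ and $\cosh 3x=\cosh x\,(1+4\sinh^2x)$, we get
\[
\frac{a(3)}{a(1)}=1+4\,\mathbb E_{\widehat\rho_{N,k}}[\sinh^2X],
\]
so $(*)$ reads $\tfrac{k+3}{k-1}\ge 1+4\mathbb E_{\widehat\rho_{N,k}}[\sinh^2 X]$, which is \eqref{eq:sinh^2ungerade}. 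This settles necessity. It also settles the non-unimodality claims. If \eqref{eq:sinh^2gerade} fails strictly, symmetry $p(j)=p(k-j)$ gives $p(k/2)<p(k/2\pm1)$. If \eqref{eq:sinh^2ungerade} fails strictly, then $p((k-1)/2)=p((k+1)/2)<p((k-3)/2)=p((k+3)/2)$. In both cases the symmetric sequence has a dip in the middle and therefore two separated maxima.

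\emph{Step 3 (sufficiency: the midpoint is the worst case).} The map $j\mapsto b_k(j)$ is minimized at the centre, namely at $j=k/2$, respectively at $j=(k\pm1)/2$. So it suffices to show that $\Delta(m)$ is maximized at $m=0$ (even case) or at $m=\pm1$ (odd case, where $m$ runs over odd integers). Write
\[
\Delta(m)=\int_{-2}^{2}(2-|t|)\,\varphi''(m+t)\,dt .
\]
Here $\varphi''(s)$ is the variance of $X$ under the tilted measure $e^{sx}\rho_{N,k}(dx)$, normalized. The key claim is that $\varphi''$ is even and nonincreasing on $[0,\infty)$. Granted this, the tent-weighted average of a symmetric unimodal function is largest when the tent is centred as close to $0$ as possible, which gives the result.

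\emph{Main obstacle: monotonicity of $\varphi''$.} This is a GHS-type statement: the magnetization $\varphi'(s)$ is concave in the field $s\ge0$. I would derive it from the structure of $\rho_{N,k}$. Its potential is
\[
V(x)=\tfrac N2x^2-(N-k)\log\cosh x,\qquad V''(x)=N-(N-k)\,\mathrm{sech}^2x ,
\]
so $V''\ge k\ge0$, $V''$ is even, and $V''$ increases in $|x|$; equivalently $V'$ is convex on $[0,\infty)$. The first thing I would try is to invoke the GHS inequality \cite{GHS70} for single-site measures of this form, i.e.\ even potentials with $V'$ convex on $[0,\infty)$, which is the classical setting where $\varphi'''(s)\le0$ for $s\ge0$. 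As a fallback I would prove $\varphi'''(s)=\mathbb E_s[(X-\mathbb E_sX)^3]\le0$ directly: a duplicated-variable or coupling argument should show that tilting to the right pushes mass into the region of larger curvature, which produces negative skewness. This third-cumulant sign is the one genuinely nontrivial input. Everything else is the explicit algebra of Steps 1 and 2.
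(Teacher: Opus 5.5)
Your proposal is correct and has the same architecture as the paper's proof. The shared steps are: the Hubbard--Stratonovich representation (your $\varphi$ is the paper's $\psi=\log M_{N,k}$ shifted by the constant $\psi(0)$), the midpoint computations via $\cosh 2x=1+2\sinh^2x$ and $\cosh 3x=\cosh x\,(1+4\sinh^2x)$, and the reduction of sufficiency to the fact that $\varphi''$ is even and non-increasing on $[0,\infty)$. Your tent-kernel formula for $\Delta(m)$ is equivalent to the paper's estimate $\int_0^2(\psi''(t+s)-\psi''(t-s))\,ds\le 0$.

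The genuine difference is how you obtain $\varphi'''(s)\le 0$ for $s\ge0$. You apply a GHS inequality directly to the one-site continuous measure $\rho_{N,k}$, after checking that $V(x)=\frac N2x^2-(N-k)\log\cosh x$ is even with $V'''(x)=2(N-k)\operatorname{sech}^2x\,\tanh x\ge0$ on $[0,\infty)$. The paper instead undoes the Gaussian transform once more and writes $\psi(t)=\tfrac{t^2}{2N}+\log\mathcal Z_{n,N}(t/N)+\mathrm{const}$. Equivalently, under $\rho_{N,k}$ the variable $X$ has the law of $S_n/N+G/\sqrt N$, where $S_n$ is the magnetization of an $n=N-k$ spin ferromagnet with coupling $1/N$ and $G$ is an independent standard Gaussian. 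Hence $\psi'''(t)=N^{-3}\frac{d^3}{dh^3}\log\mathcal Z_{n,N}(h)$ at $h=t/N$, and the original spin-$\tfrac12$ GHS inequality applies.

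The inequality you invoke is true, but the reference you give does not cover it. Griffiths--Hurst--Sherman treats only Ising spins. GHS for single-spin measures $e^{-V(x)}\,dx$ with $V$ even and $V'$ convex on $[0,\infty)$ is the Ellis--Monroe--Newman extension, so your route rests on that stronger, though classical, theorem.

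Each route buys something different. The paper's route needs only GHS for Ising ferromagnets and identifies $\psi''$ with $\frac1N$ plus the rescaled susceptibility of the remaining $N-k$ spins. Your route is a one-line verification and would apply verbatim to any model whose Hubbard--Stratonovich measure lies in the Ellis--Monroe--Newman class. Your fallback coupling argument is only a sketch and is unnecessary once either citation is in place.
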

\begin{proof}
To derive a suitable representation of $p_{N,k}$, recall that at
$\beta=1$ and $h=0$ the Curie--Weiss measure is given by
\[
\mu_N(\sigma)=\frac{1}{Z_N}\exp\left\{\frac{1}{2N}\left(\sum_{i=1}^N\sigma_i\right)^2\right\}.
\]
We use the well known Hubbard--Stratonovich identity
\[
\exp\left\{\frac{S^2}{2N}\right\}
=\sqrt{\frac{N}{2\pi}}\int_{\mathbb R} \exp\left\{-\frac{N}{2}x^2+xS\right\}\,dx,
\]
(which is simply the Gaussian integral obtained by completing the square.)
Recall that $\mathcal P_k$ is the number of positive spins among the first $k$ spins and fix $\mathcal P_k=j$. There are $\binom{k}{j}$ configurations of the
first $k$ spins with exactly $j$ positive spins, and for each of them $\sum_{i=1}^k\sigma_i=j-(k-j)=2j-k$.
Hence
\[
p_{N,k}(j)=\mu_N(\mathcal P_k=j)=
\frac{1}{Z_N}\binom{k}{j}\sum_{\sigma_{k+1},\ldots,\sigma_N}
\exp\Big\{\frac{1}{2N}\Big(2j-k+\sum_{i=k+1}^N\sigma_i\Big)^2\Big\}.
\]
Applying the Hubbard--Stratonovich identity with
$S=2j-k+\sum_{i=k+1}^N\sigma_i$
gives
\[
p_{N,k}(j)=\frac{1}{Z_N}\sqrt{\frac{N}{2\pi}}\binom{k}{j}
\int_{\mathbb R}e^{-Nx^2/2}e^{(2j-k)x}\sum_{\sigma_{k+1},\ldots,\sigma_N}
e^{x\sum_{i=k+1}^N\sigma_i}\,dx.
\]
The remaining spin sum factorizes:
\[
\sum_{\sigma_{k+1},\ldots,\sigma_N} e^{x\sum_{i=k+1}^N\sigma_i}
=\prod_{i=k+1}^N\sum_{\sigma_i\in\{-1,1\}}e^{x\sigma_i}
=(2\cosh x)^{N-k}.
\]
Hence, 
\[
p_{N,k}(j)=C_N\binom{k}{j}\int_{\mathbb R}
e^{-Nx^2/2}(2\cosh x)^{N-k}e^{(2j-k)x}\,dx,
\]
where
\[
C_N:=\frac{1}{Z_N}\sqrt{\frac{N}{2\pi}}
\]
does not depend on $j$. Thus, defining
\[
M_{N,k}(t):=\int_{\mathbb R} e^{-Nx^2/2}(2\cosh x)^{N-k}e^{tx}\,dx,
\]
we obtain the representation
\begin{equation}\label{represent-weights}
p_{N,k}(j)= C_N\binom{k}{j}M_{N,k}(2j-k).
\end{equation}
For convenience define
\[\psi(t):=\log M_{N,k}(t)\] and $t_j:=2j-k$.
Thus we obtain
\begin{multline*}
\log p_{N,k}(j+1)+\log p_{N,k}(j-1)-2\log p_{N,k}(j)
\\= \log\frac{k^2-t_j^2}{(k+2)^2-t_j^2} +\psi(t_j+2)+\psi(t_j-2)-2\psi(t_j).
\end{multline*}
We want to show that both terms on the right-hand side are even and
non-increasing as functions of $|t_j|$.
For the first term this is immediate, since
\[
t\longmapsto \log\frac{k^2-t^2}{(k+2)^2-t^2}
\]
is even and strictly decreasing for $t>0$.
Thus it remains to show that
\[
t\longmapsto \psi(t+2)+\psi(t-2)-2\psi(t)
\]
is even and non-increasing for $t\geq0$. To this end put $n=N-k$ and define, for $\sigma \in \{-1,1\}^n$,
\[
\mu_h(\sigma) = \mu_{n,N, h}(\sigma) = \frac{1}{\mathcal Z_{n,N}(h)}
\exp\left\{\frac{1}{2N}\left(\sum_{i=1}^n\sigma_i\right)^2+h\sum_{i=1}^n\sigma_i
\right\},
\]
with normalization factor
\[
\mathcal Z_{n,N}(h):=\sum_{\sigma\in\{-1,1\}^n}
\exp\left\{\frac{1}{2N}\left(\sum_{i=1}^n\sigma_i\right)^2+h\sum_{i=1}^n\sigma_i
\right\}.
\]
Writing $S_n=\sum_{i=1}^n\sigma_i$ and expanding
$(2\cosh x)^n=\sum_{\sigma\in\{-1,1\}^n}e^{xS_n}$,
we may apply the Gaussian integral underlying the Hubbard--Stratonovich
transformation once more. This yields
\[
M_{N,k}(t)=\sum_{\sigma\in\{-1,1\}^n}\int_{\mathbb R}\exp\left\{-\frac N2x^2+x(S_n+t)\right\}\,dx=\sqrt{\frac{2\pi}{N}}\,e^{t^2/(2N)}
\mathcal Z_{n,N}\left(\frac{t}{N}\right).
\]
Hence
\begin{equation}\label{psi-formula}
\psi(t)
=
\frac12\log\frac{2\pi}{N}
+\frac{t^2}{2N}
+\log\mathcal Z_{n,N}\left(\frac{t}{N}\right).
\end{equation}
We write $\E_h[\cdot]$ and $\operatorname{Var}_h(\cdot)$ for the expectation and the variance with respect to $\mu_h$ and we will use the following well-know identities, with $S_n=\sum_{i=1}^n\sigma_i$,
\[
\frac{d}{dh}\log\mathcal Z_{n,N}(h)=\E_h[S_n],
\qquad \frac{d^2}{dh^2}\log\mathcal Z_{n,N}(h) = \operatorname{Var}_h(S_n)\, .
\]
Differentiating \eqref{psi-formula}, we obtain
\[
\psi'(t)=\frac{t}{N}+\frac{1}{N}\E_{t/N}[S_n]
\]
and, differentiating once more,
\begin{equation}\label{psi-doubleprime}
\psi''(t)=\frac1N+\frac{1}{N^2}\operatorname{Var}_{t/N}(S_n).
\end{equation}
By spin-flip symmetry,
\[
\mathcal Z_{n,N}(h)=\mathcal Z_{n,N}(-h),
\]
and hence $\psi''$ is even.

To obtain the required monotonicity, we use the GHS inequality
\cite{GHS70}. Notice that the interaction defining $\mathcal Z_{n,N}$ is
ferromagnetic, since,
\[
\frac{1}{2N}S_n^2 =\frac{1}{N}\sum_{1\leq i<j\le n}\sigma_i\sigma_j+\frac{n}{2N}
\]
For a ferromagnetic Ising model with non-negative external field, the GHS
inequality states that the truncated three-point correlations (or third Ursell functions) are non-positive:
\[
\langle\sigma_i;\sigma_j;\sigma_\ell\rangle_h\leq0,
\]
where
\[
\begin{aligned}
	\langle\sigma_i;\sigma_j;\sigma_\ell\rangle_h
	&:= \E_h[\sigma_i\sigma_j\sigma_\ell] -\E_h[\sigma_i\sigma_j]\E_h[\sigma_\ell]\\
	&\quad-\E_h[\sigma_i\sigma_\ell]\E_h[\sigma_j]
	-\E_h[\sigma_j\sigma_\ell]\E_h[\sigma_i]
+2\E_h[\sigma_i]\E_h[\sigma_j]\E_h[\sigma_\ell].
\end{aligned}
\]
To see how this enters, recall that
\[
\frac{d^2}{dh^2}\log\mathcal Z_{n,N}(h)=
\operatorname{Var}_h(S_n)=\sum_{i,j=1}^n\left(
\E_h[\sigma_i\sigma_j]-\E_h[\sigma_i]\E_h[\sigma_j]\right).
\]
Again differentiating with respect to $h$ gives
\[
\begin{aligned}
\frac{d^3}{dh^3}\log\mathcal Z_{n,N}(h)
&=\sum_{i,j=1}^n\frac{d}{dh}\left(\E_h[\sigma_i\sigma_j]
-\E_h[\sigma_i]\E_h[\sigma_j]\right)\\
&= \sum_{i,j,\ell=1}^n \langle\sigma_i;\sigma_j;\sigma_\ell\rangle_h.
\end{aligned}
\]
Hence GHS implies, for $h\geq0$,
$\frac{d^3}{dh^3}\log\mathcal Z_{n,N}(h)\le 0$.
Equivalently,
$h\mapsto\operatorname{Var}_h(S_n)$ is non-increasing on $[0,\infty)$. Recalling \eqref{psi-doubleprime},
we conclude that $\psi''$ is even and non-increasing on $[0,\infty)$.

Hence, for $t\ge 0$ we have
\[
\psi'(t+2)+\psi'(t-2)-2\psi'(t)=
\int_0^2 \left(\psi''(t+s)-\psi''(t-s)\right)\,ds \le 0,
\]
since $|t+s|\geq|t-s|$.
Hence $\psi(t+2)+\psi(t-2)-2\psi(t)$ is even and non-increasing on $[0,\infty)$.

Hence we obtain that
$\log p_{N,k}(j+1)+\log p_{N,k}(j-1)-2\log p_{N,k}(j)$
is maximal at those indices for which $|t_j|=|2j-k|$ is minimal.

Now suppose first that $k=2r$ is even. Since the discrete second difference of $\log p_{N,k}$ is maximal at $j=r$, we have, for every $j$,
\[
\log p_{N,k}(j+1)+\log p_{N,k}(j-1)-2\log p_{N,k}(j)
\]
\[
\le 
\log p_{N,k}(r+1)+\log p_{N,k}(r-1)-2\log p_{N,k}(r).
\]
By symmetry, $p_{N,k}(r-1)=p_{N,k}(r+1)$,
and hence
\[
\log p_{N,k}(r+1)+\log p_{N,k}(r-1)-2\log p_{N,k}(r)
= 2\log\frac{p_{N,k}(r+1)}{p_{N,k}(r)}.
\]
Therefore, if 
\begin{equation}\label{eq:central_ineq}
p_{N,k}(r+1)\leq p_{N,k}(r),
\end{equation} 
all discrete second differences of $\log p_{N,k}$ are non-positive. Thus
$(p_{N,k}(j))_{j=0}^k$ is log-concave, and hence unimodal.
If instead $p_{N,k}(r+1)>p_{N,k}(r)$,
then by symmetry
\[
p_{N,k}(r-1)=p_{N,k}(r+1)>p_{N,k}(r),
\]
so that $r$ is a strict local minimum and the distribution is not unimodal.
Let us see, when \eqref{eq:central_ineq} is satisfied: 
Recalling \eqref{represent-weights},
\[
\frac{p_{N,k}(r+1)}{p_{N,k}(r)}=\frac{r}{r+1}\,\frac{M_{N,k}(2)}{M_{N,k}(0)}
\]
and $\rho_{N,k}$ is symmetric, we obtain
\[
\frac{M_{N,k}(2)}{M_{N,k}(0)}=\mathbb E_{\rho_{N,k}}[e^{2X}]=
\mathbb E_{\rho_{N,k}}[\cosh(2X)]=1+2\mathbb E_{\rho_{N,k}}[\sinh^2X]\, .
\]
Hence
\[
\frac{p_{N,k}(r+1)}{p_{N,k}(r)}
=\frac{r}{r+1}\left(1+2\mathbb E_{\rho_{N,k}}[\sinh^2X]\right).
\]
Therefore,
$\frac{p_{N,k}(r+1)}{p_{N,k}(r)}\le 1$
is equivalent to
\[
r\left(1+2\mathbb E_{\rho_{N,k}}[\sinh^2X]\right)\leq r+1,
\]
or, since $k=2r$,
\[
k\,\mathbb E_{\rho_{N,k}}[\sinh^2X]\le 1.
\]
The case of odd $k$ is analogous. More precisely, let $k=2r+1$. By symmetry,
\[
p_{N,k}(r)=p_{N,k}(r+1), \qquad \text{as well as } \quad p_{N,k}(r-1)=p_{N,k}(r+2).
\]
Since now $|t_j|=|2j-k|$ is minimal at $j=r,r+1$, the preceding
monotonicity argument shows that $(p_{N,k}(j))_{j=0}^k$ is log-concave
if and only if
$p_{N,k}(r+2)\leq p_{N,k}(r+1)$.
Moreover,
\[
\frac{p_{N,k}(r+2)}{p_{N,k}(r+1)}
=
\frac{r}{r+2}\frac{M_{N,k}(3)}{M_{N,k}(1)}.
\]
To find an equivalent formulation, recall that
$\widehat\rho_{N,k}(dx):=
\frac{\cosh x}{\mathbb E_{\rho_{N,k}}[\cosh X]}\rho_{N,k}(dx)$.

Using the symmetry of $\rho_{N,k}$ and
$\frac{\cosh(3x)}{\cosh x}=1+4\sinh^2x$,
we obtain
\[
\frac{M_{N,k}(3)}{M_{N,k}(1)}=1+4\mathbb E_{\widehat\rho_{N,k}}[\sinh^2X].
\]
This yields
\[
\frac{p_{N,k}(r+2)}{p_{N,k}(r+1)}\leq1\quad \text{if and only if} \quad 
(k-1)\mathbb E_{\widehat\rho_{N,k}}[\sinh^2X]\leq1.
\]
If the reverse strict inequality holds, then
\[
p_{N,k}(r-1)=p_{N,k}(r+2)>p_{N,k}(r)=p_{N,k}(r+1),
\]
so the distribution is not unimodal.
\end{proof}

\begin{corollary}\label{cor:unimodality-transition}
For every $\varepsilon>0$ there exists $N_\varepsilon<\infty$ such that, for all $N\geq N_\varepsilon$, the following statements hold:
\begin{enumerate}
\item If $k\leq \left(\frac12-\varepsilon\right)N$,
then the distribution $(p_{N,k}(j))_{j=0}^k$ is log-concave and hence
unimodal.
\item If $k\geq \left(\frac12+\varepsilon\right)N$,
then the distribution $(p_{N,k}(j))_{j=0}^k$ is not unimodal.
\end{enumerate}
\end{corollary}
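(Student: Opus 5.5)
By Proposition~\ref{prop:unimodality-criterion}, it suffices to show the following. For $k\le(\frac12-\varepsilon)N$ we need $k\,\mathbb E_{\rho_{N,k}}[\sinh^2X]\le1$, and in the odd case $(k-1)\mathbb E_{\widehat\rho_{N,k}}[\sinh^2X]\le 1$. For $k\ge(\frac12+\varepsilon)N$ we need the reverse strict inequalities. So the whole task is to compute the asymptotics of $\mathbb E_{\rho_{N,k}}[\sinh^2X]$ uniformly in the ratio $\kappa:=k/N$. Write
\[
\rho_{N,k}(dx)\propto \exp\{-N\Phi_\kappa(x)\}\,dx,\qquad \Phi_\kappa(x):=\frac{x^2}{2}-(1-\kappa)\log(2\cosh x).
\]
Then $\Phi_\kappa''(0)=1-(1-\kappa)=\kappa$. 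Moreover, for $\kappa>0$ the function $\Phi_\kappa$ has a unique minimum at $0$, because $(1-\kappa)\tanh x< x$ for $x>0$. Hence $X$ concentrates near $0$ at scale $(N\kappa)^{-1/2}=k^{-1/2}$. At leading order $\mathbb E[\sinh^2X]\approx\mathbb E[X^2]\approx 1/k$, so $k\,\mathbb E[\sinh^2X]\to 1$, and the decision is made by the next-order correction. The task is therefore a second-order Laplace expansion.

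First, I would expand up to relative order $1/k$. Taylor expanding gives $\Phi_\kappa(x)=\frac{\kappa}{2}x^2+\frac{1-\kappa}{12}x^4+O(x^6)$ and $\sinh^2x=x^2+\frac{x^4}{3}+O(x^6)$. Substitute $x=y/\sqrt{k}$, so that the density becomes proportional to $\exp\{-\frac{y^2}{2}-\frac{(1-\kappa)}{12\kappa k}y^4+\dots\}$. With $Y$ standard Gaussian, this yields
\[
k\,\mathbb E_{\rho_{N,k}}[\sinh^2X]
=1+\frac1k\Big(\frac{\mathbb E Y^4}{3}-\frac{1-\kappa}{12\kappa}\big(\mathbb E Y^6-\mathbb E Y^2\,\mathbb E Y^4\big)\Big)+o(1/k)
=1+\frac1k\Big(1-\frac{1-\kappa}{\kappa}\Big)+o(1/k),
\]
using $\mathbb E Y^4=3$ and $\mathbb E Y^6-\mathbb E Y^4=15-3=12$. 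The correction equals $\frac{2\kappa-1}{\kappa k}$. It is negative, and of size at least $c_\varepsilon/k$, when $\kappa\le\frac12-\varepsilon$, and positive when $\kappa\ge\frac12+\varepsilon$. This identifies the threshold at $N/2$.

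For odd $k$ the same expansion applies with the extra weight $\cosh x=1+x^2/2+\dots$. This reweighting changes $\mathbb E[X^2]$ by $\frac{1}{2k}(\mathbb E Y^4-1)=\frac1k$ at order $k^{-2}$, i.e.\ $k\,\mathbb E_{\widehat\rho}[\sinh^2X]=1+\frac{1}{k}(1+1-\frac{1-\kappa}{\kappa})+o(1/k)$. Multiplying by $(k-1)$ instead of $k$ subtracts $1/k$, and the result is the same correction $\frac{2\kappa-1}{\kappa k}$. I would verify this bookkeeping carefully.

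The main obstacle is uniformity. The error terms must be $o(1/k)$ uniformly over all $k$ in each range, including small $k$ and $\kappa$ close to $0$, where $1-\kappa\over\kappa$ is huge and the quartic term is not a perturbation. I would split into three cases.

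\begin{itemize}
\item[(i)] If $k\le N^{1/2}$ or $\kappa\le\delta$ small, I would argue directly rather than perturbatively. In that regime the quartic dominates, or at least contributes strongly. I would use $\sinh^2 x\le x^2+x^4/3\cdot e^{x^2}$-type bounds together with a direct comparison. Alternatively, I would note that the criterion is monotone in $k$, or use the GHS-based convexity to show $k\,\mathbb E[\sinh^2X]$ stays below $1$ by a margin.
\item[(ii)] For $\delta\le\kappa\le 1-\delta'$, the Laplace expansion is uniform, by Taylor remainders and Gaussian tails. Large-$|x|$ contributions are exponentially small in $N$ since $\Phi_\kappa(x)\ge c\min(x^2,|x|)$.
\item[(iii)] For $\kappa$ near $1$, $\Phi_\kappa''(0)=\kappa$ is bounded below and the computation is even easier, as the correction tends to $1/k>0$.
\end{itemize}

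The crude small-$\kappa$ regime (i) is where I expect the real work to be. There I would first try to prove $k\,\mathbb E_{\rho_{N,k}}[\sinh^2X]<1$ by an integration-by-parts identity, $\mathbb E[X\Phi'_\kappa(X)]=1/N$, which gives the exact relation $\kappa\,\mathbb E[X^2]+(1-\kappa)\mathbb E[X(X-\tanh X)]=1/N$, and then compare $\sinh^2$ with these moments.
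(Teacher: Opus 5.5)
Your second-order Laplace expansion is correct, and it is what the paper does in its two perturbative regimes. After $x=y/\sqrt k$ the quartic coefficient $\frac{1-\kappa}{12\kappa k}=\frac{N-k}{12k^2}$ is the paper's $a_N$. Your correction $\frac{2\kappa-1}{\kappa k}$ coincides with the paper's $\frac{2k-N}{k^2}=\frac{2c_N-1}{c_N^2N}$, and your odd-$k$ bookkeeping also checks out.

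\textbf{The gap is your case (i).} For it you give no argument, only a list of things you might try, none of which is carried out. There the expansion genuinely fails. For $k=O(\sqrt N)$ the quartic coefficient $\frac{N-k}{12k^2}$ is of order one or larger, so $\sqrt k\,X$ is not close to Gaussian. Moreover, $k\,\mathbb E_{\rho_{N,k}}[\sinh^2X]$ then differs from $1$ by an amount of order one, not by a $1/k$ correction. For $\sqrt N\ll k\le\delta N$ your expansion does still work, with error $o(N/k^2)$ rather than $o(1/k)$, giving $-\frac{N}{k^2}(1+o(1))$; this is the paper's middle regime.

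This is not a side case: unimodality for $k\sim\alpha\sqrt N$ is exactly what the proof of part (2) of Theorem~\ref{thm:main1} uses. The paper handles it by the rescaling $V_N=N^{1/4}X\Rightarrow V_\alpha$, with density proportional to $e^{-\alpha v^2/2-v^4/12}$ and convergence of moments. It then uses the identity $\alpha\,\mathbb E[V_\alpha^2]+\frac13\mathbb E[V_\alpha^4]=1$ to see that the limit $\alpha\,\mathbb E[V_\alpha^2]$ is strictly below $1$.

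\textbf{How to close it.} Your last idea is the finite-$N$ version of that identity, and it finishes the job once you do the comparison. Multiplying it by $N$ gives
\[
k\,\mathbb E_{\rho_{N,k}}[\sinh^2X]-1=\mathbb E_{\rho_{N,k}}\big[k(\sinh^2X-X^2)-(N-k)(X^2-X\tanh X)\big].
\]
On $\{|x|\le1\}$ one has $\sinh^2x-x^2\le Cx^4$ and $x^2-x\tanh x\ge cx^4$. So for $k\le\delta N$ with $\delta<c/(c+C)$, the integrand is at most $-(c-(c+C)\delta)NX^4$ there. Also $N\,\mathbb E[X^4;|X|\le1]$ is at least of order $\min(1,N/k^2)\ge1/N$. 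The event $\{|X|>1\}$ contributes only $O(e^{-c'N})$, by convexity of $x^2/2-\log\cosh x$ and a polynomial lower bound on the normalizing constant.

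For odd $k$, integrating $\frac{d}{dx}\big(x\cosh x\,e^{-N\Phi_\kappa(x)}\big)$ gives
\[
k\,\mathbb E_{\widehat\rho_{N,k}}[X^2]+(N-k)\,\mathbb E_{\widehat\rho_{N,k}}[X^2-X\tanh X]=1+\mathbb E_{\widehat\rho_{N,k}}[X\tanh X].
\]
Since $x\tanh x\le\sinh^2x$, the same upper bound then holds for $(k-1)\mathbb E_{\widehat\rho_{N,k}}[\sinh^2X]-1$.

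This settles part (1) uniformly in $k\le\delta N$, with no limit law or subsequence argument. Together with your cases (ii)--(iii), it would complete the proof. As a bonus, the same identity with $\sinh^2x-x^2\ge x^4/3\ge x^2-x\tanh x$ shows $k\,\mathbb E_{\rho_{N,k}}[\sinh^2X]>1$ for every $N$ and every even $k\ge N/2$.
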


\begin{proof}
By Proposition~\ref{prop:unimodality-criterion}, it suffices to analyze
\eqref{eq:sinh^2gerade} and \eqref{eq:sinh^2ungerade}, respectively.

There are three relevant regimes. Assume first that $k/\sqrt N$ remains
bounded, more precisely that
$\frac{k}{\sqrt N}\longrightarrow\alpha\in[0,\infty)$.
Let $X$ be a random variable with law $\rho_{N,k}$, then the density of $X$ is proportional to
\[
\exp\left\{-\frac N2x^2+(N-k)\log\cosh x\right\}.
\]
Writing $q(x):=\frac{x^2}{2}-\log\cosh x$,
the exponent can be rewritten in the form of
$-\frac{k}{2}x^2-(N-k)q(x)$.
Since
\[
q(x)=\frac{x^4}{12}+O(x^6)
\qquad\text{as }x\to0,
\]
the quadratic and quartic terms are of the same order when
$x$ is of order $N^{-1/4}$. We therefore set
\[
V_N:=N^{1/4}X \qquad \text{and} \qquad \alpha_N:=\frac{k}{\sqrt N}.
\]
Up to normalization, the density of $V_N$ is then
\[
\exp\left\{-\frac{\alpha_N}{2}v^2-(N-k)q(N^{-1/4}v)\right\}.
\]
For every fixed $v$,
\[
(N-k)q(N^{-1/4}v)=\frac{N-k}{N}\frac{v^4}{12}+O(N^{-1/2}v^6)\longrightarrow \frac{v^4}{12},
\]
and hence
\[
\frac{\alpha_N}{2}v^2+(N-k)q(N^{-1/4}v)\longrightarrow\frac{\alpha}{2}v^2+\frac{v^4}{12}.
\]
To analyze the behavior of $V_N$, note that there exists $c>0$ such that
\[
q(x)\geq cx^4\quad\text{for }|x|\leq1,
\qquad \text{and} \qquad 
q(x)\geq cx^2\quad\text{for }|x|\geq1.
\]
Since $k/\sqrt N$ is bounded, $(N-k)/N$ is bounded away from zero for
large $N$. These estimates give an integrable bound, independent of $N$,
for the rescaled densities. Dominated convergence therefore also gives
convergence of the normalizing constants. Therefore,
$V_N\Rightarrow V_\alpha$,
where $V_\alpha$ has density proportional to
\[
\exp\left\{-\frac{\alpha}{2}v^2-\frac{v^4}{12}\right\}.
\]
The preceding tail bounds imply uniform integrability of all polynomial
powers of $V_N$. Hence the weak convergence $V_N\Rightarrow V_\alpha$
also implies convergence of all polynomial moments; in particular,
\[
\mathbb E[V_N^2]\longrightarrow\mathbb E[V_\alpha^2],
\qquad \text{and}\quad 
\mathbb E[V_N^4]\longrightarrow\mathbb E[V_\alpha^4].
\]

Since $\sinh^2x=x^2+O(x^4)$,
we have
\[
k\mathbb E_{\rho_{N,k}}[\sinh^2X]=\frac{k}{\sqrt N}\mathbb E[V_N^2]
+O\left(\frac{k}{N}\mathbb E[V_N^4]\right)\longrightarrow\alpha\mathbb E[V_\alpha^2].
\]
Now, the right hand side is strictly smaller than $1$. Indeed, write
$\Phi_\alpha(v):=\frac{\alpha}{2}v^2+\frac{v^4}{12}$.
Since $\Phi_\alpha'(v)=\alpha v+\frac{v^3}{3}$,
we have
\[
\frac{d}{dv}\left(v e^{-\Phi_\alpha(v)}\right)
=\left(1-\alpha v^2-\frac{v^4}{3}\right)e^{-\Phi_\alpha(v)}.
\]
Hence, by integration,
\[
0=\int_{\mathbb R}\left(1-\alpha v^2-\frac{v^4}{3}\right)e^{-\Phi_\alpha(v)}\,dv.
\]
Dividing by the normalizing constant of $V_\alpha$ yields
\[
\alpha\mathbb E[V_\alpha^2]+\frac13\mathbb E[V_\alpha^4]=1.
\]
Hence $\alpha\mathbb E[V_\alpha^2]<1$
for $\alpha>0$. (The assertion is immediate for $\alpha=0$).

For odd $k$, the measure $\widehat\rho_{N,k}$ differs from $\rho_{N,k}$
only by the factor $\frac{\cosh X}{\mathbb E_{\rho_{N,k}}[\cosh X]}$ in the density.
Since $X=N^{-1/4}V_N$ and
\[
\cosh X=1+O(N^{-1/2}V_N^2),
\]
this tilt is asymptotically negligible. Therefore,
\[
(k-1)\mathbb E_{\widehat\rho_{N,k}}[\sinh^2X]
\longrightarrow
\alpha\mathbb E[V_\alpha^2]<1
\]
as well.

Let us turn to the regime $\sqrt N\ll k\ll N$. In this case the quadratic
term in the exponent dominates. Indeed, it suggests the scale
$x\asymp k^{-1/2}$, and on this scale the quartic contribution is of order
$Nx^4\asymp\frac{N}{k^2}\to 0$, since $k\gg\sqrt N$. We therefore set
$Y:=\sqrt{k}\,X$.
The density of $Y$ under $\rho_{N,k}$ is proportional to
\[
\exp\left\{-\frac{y^2}{2}-\frac{N-k}{12k^2}y^4+O\left(\frac{N}{k^3}y^6\right)\right\}.
\]
Set
\[
a_N:=\frac{N-k}{12k^2}.
\]
Then $a_N\to0$, while $N/k^3=o(a_N)$. Thus, up to an error of smaller
order, the law of $Y$ is a quartic perturbation of the standard Gaussian
law. If $Z\sim\mathcal N(0,1)$, then
\[
\mathbb E_{\rho_{N,k}}[Y^2]=\frac{\mathbb E\left[Z^2e^{-a_NZ^4}\right]}{\mathbb E\left[e^{-a_NZ^4}\right]}+o(a_N).
\]
Expanding numerator and denominator to first order in $a_N$ gives
\[
\begin{aligned}
\mathbb E_{\rho_{N,k}}[Y^2]&=\frac{\mathbb E[Z^2]-a_N\mathbb E[Z^6]+o(a_N)}{1-a_N\mathbb E[Z^4]+o(a_N)}\\
&=\mathbb E[Z^2]-a_N\left(\mathbb E[Z^6]-\mathbb E[Z^2]\mathbb E[Z^4]\right)+o(a_N).
\end{aligned}
\]
Using
\[
\mathbb E[Z^2]=1,\qquad
\mathbb E[Z^4]=3,\qquad \text{and} \qquad 
\mathbb E[Z^6]=15,
\]
we obtain
\[
\mathbb E_{\rho_{N,k}}[Y^2]=1-\frac{N-k}{k^2}+o\left(\frac{N}{k^2}\right).
\]
Moreover,
\[
\mathbb E_{\rho_{N,k}}[Y^4]=3+o(1).
\]
Since $X=Y/\sqrt{k}$ and
\[
\sinh^2x=x^2+\frac{x^4}{3}+O(x^6),
\]
we therefore obtain
\[
\begin{aligned}
k\,\mathbb E_{\rho_{N,k}}[\sinh^2X]&=\mathbb E_{\rho_{N,k}}[Y^2]
+\frac{1}{3k}\mathbb E_{\rho_{N,k}}[Y^4]+o\left(\frac{N}{k^2}\right)\\
&=1-\frac{N-k}{k^2}+\frac1k+o\left(\frac{N}{k^2}\right)\\
&=1+\frac{2k-N}{k^2}+o\left(\frac{N}{k^2}\right).
\end{aligned}
\]
Therefore,
\[
k\,\mathbb E_{\rho_{N,k}}[\sinh^2X]-1
=\frac{2k-N}{k^2}+o\left(\frac{N}{k^2}\right)=-\frac{N}{k^2}(1+o(1)),
\]
since $k=o(N)$, and hence the right-hand side is negative for all sufficiently large $N$.

For odd $k$, recall the form of $\widehat\rho_{N,k}$.
As $X=O_{\mathbb P}(k^{-1/2})$, the additional factor satisfies
$\cosh X=1+O_{\mathbb P}(k^{-1})$ and is negligible on the scale
$N/k^2$. The same expansion therefore yields
\[
(k-1)\mathbb E_{\widehat\rho_{N,k}}[\sinh^2X]-1
=
\frac{2k-N}{k^2}
+o\left(\frac{N}{k^2}\right),
\]
which is again negative when $k=o(N)$.

Finally, suppose that $\frac{k}{N}\longrightarrow c\in(0,1]$.
Writing $c_N=k/N$, the density of $\rho_{N,k}$ is proportional to
$e^{-N\Phi_{c_N}(x)}$, where
\[
\Phi_c(x)=\frac{x^2}{2}-(1-c)\log\cosh x=
\frac c2x^2+\frac{1-c}{12}x^4+O(x^6).
\]
Since $c_N\to c>0$, the quadratic coefficient stays bounded away from
zero. Thus the leading term in the exponent is
$-\frac{Nc_N}{2}x^2$,
which shows that the mass of $\rho_{N,k}$ is concentrated on the scale
$x\asymp N^{-1/2}$. Equivalently, $\sqrt N\,X$ is of order one.
Set $Y:=\sqrt{c_NN}\,X$.
Then
\[
N\Phi_{c_N}\left(\frac{y}{\sqrt{c_NN}}\right)
=\frac{y^2}{2}+\frac{1-c_N}{12c_N^2N}y^4+O\left(\frac{y^6}{N^2}\right).
\]
Thus, writing $a_N:=\frac{1-c_N}{12c_N^2N}$,
the law of $Y$ is a quartic perturbation of the standard Gaussian law and as above,
\[
\begin{aligned}
\mathbb E_{\rho_{N,k}}[Y^2]&=\mathbb E[Z^2]-a_N\left(\mathbb E[Z^6]
-\mathbb E[Z^2]\mathbb E[Z^4]\right)+O(N^{-2})\\
&=1-\frac{1-c_N}{c_N^2N}+O(N^{-2}),
\end{aligned}
\]
where $Z\sim\mathcal N(0,1)$. Thus,
\[
\mathbb E_{\rho_{N,k}}[X^2]=\frac{1}{c_NN}+\frac{c_N-1}{c_N^3N^2}
+O(N^{-3}).
\]
Similarly we see,
\[
\mathbb E_{\rho_{N,k}}[Y^4]=3+O(N^{-1}),
\]
and hence
\[
\mathbb E_{\rho_{N,k}}[X^4]=\frac{3}{c_N^2N^2}+O(N^{-3}).
\]
Using again $\sinh^2x=x^2+\frac{x^4}{3}+O(x^6)$,
we find
\[\mathbb E_{\rho_{N,k}}[\sinh^2X]=\frac{1}{c_NN}+\frac{2c_N-1}{c_N^3N^2}
+O(N^{-3}),\]
and hence, for even $k$,
\[
k\,\mathbb E_{\rho_{N,k}}[\sinh^2X]-1=\frac{2c_N-1}{c_N^2N}
+O(N^{-2}).
\]
For odd $k$, expanding the additional factor $\cosh X$ gives
$\mathbb E_{\widehat\rho_{N,k}}[\sinh^2X]=
\frac{1}{c_NN}+\frac{3c_N-1}{c_N^3N^2}+O(N^{-3})$,
and therefore
\[
(k-1)\mathbb E_{\widehat\rho_{N,k}}[\sinh^2X]-1
=\frac{2c_N-1}{c_N^2N}+O(N^{-2}).
\]
Thus, for both parities, the sign is eventually negative if $c<1/2$
and positive if $c>1/2$.

Together with the preceding case $k=O(\sqrt N)$, these estimates cover
all possible subsequences. Hence, uniformly away from $k=N/2$, the
distribution is log-concave and unimodal below $N/2$, and non-unimodal
above $N/2$.
\end{proof}

\end{document}